\newtheorem{theorem}{Theorem}
\theoremstyle{definition}
\newtheorem{definition}{Definition}
\newtheorem{example}{Example}
\theoremstyle{remark}
\newtheorem{remark}{Remark}
\numberwithin{equation}{section}
\begin{document}
\setcounter{page}{1}

%%%%%%%%%%%%%%%%%%%%%%%%%%%%

\begin{center}
{\textbf{\Large Strongly geodesic preinvexity and Strongly Invariant $\eta$-Monotonicity on Riemannian Manifolds and its Application}}\\
\bigskip
{\textbf {Akhlad Iqbal$^1$, Askar Hussain$^2$ and Hilal Ahmad Bhat$^3$ }}\\

Department of Mathematics,\\ Aligarh Muslim University, Aligarh-202002, India\\[0pt]
$^1$ akhlad6star@gmail.com,$^2$askarhussain59@gmail.com and $^3$bhathilal01@gmail.com
\end{center}
 \bigskip
 
\textbf{Abstract}:
 In this paper, we present strongly geodesic preinvexity on Riemannian manifolds (RM) and  strongly $\eta$-invexity of order $m$ on RM. Furthermore, we define strongly invariant $\eta$-monotonicity of order $m$ on RM. Under {\bf{Condition C}}, an important characterization of these functions are studied. We construct several non-trivial examples in support of these definitions.  Afterwords, an important and significant characterization of a strict $\eta$-minimizers ($\eta$-minimizers) of order $m$ for $MOP$ and a solution to the variational like-inequality problem
 $(VVLIP)$ has been derived. 

\noindent
{\bf{Keywords}}{: Strongly geodesic preinvexity, Strongly  $\eta$-invex functions, Invariant $\eta$-monotone vector fields, $MOP$, $VVLIP$, RM.}
\section{\bf{Introduction}}
The variational inequality problem and monotonicity play a vital role in the existence of the solution to the practical problems in many fields of mathematical science and physics such as optimization theory, science, engineering, etc. The generalized monotonicity is effective equipment for the existence and analysis of a solution to the variational inclusion and complementarity problems. The convexity is closely related to monotonicity. It is observed that the monotonicity of the corresponding gradient function is equivalent to the convexity of the real-valued function. Recently, the extension of convexity and monotonicity have been developed by authors, see \cite{Pini2,Correa,Fan}.  Generally, a manifold is a different space from a linear space. Rapcsak \cite{Rapcsak} and Udriste \cite{Udriste} extended convexity with techniques from linear space to RM, which is called geodesic convexity. The different kinds of invariant monotone vector fields on RM presented by Barani \cite{Poury2}. It has been shown that many the results and properties established from the Euclidean space to Riemannian  manifolds preserves for invariant monotone vector fields. Also, optimization has been more developed on RM, see \cite{Yang, Garzon}. The concept of invex function on RM defined by Pini \cite{Pini1} and several properties have been discussed. The generalized invexity has developed the convex analysis, and the generalized invexity is closely related to the generalized invariant monotonicity, which has been studied in \cite{Lang}. Yang et al. \cite{Yang} presented invariant monotonicity, which is the generalization of monotonicity. The existence of a solution to $VLIP$ was derived under generalized monotonicity \cite{Lang}. Nemeth \cite{Nemeth}, presented a monotone vector field on RM, which is an important generalization monotone operator. Noor  \cite{Noor} discussed the notion of $\alpha$-invexity  and  $\alpha$-monotonicity.  Iqbal et al. \cite{Akhlad3} extended it on RM, which is called strong $\alpha$-invexity and invariant $\alpha$-monotonicity. 

Motivated by research works, see in \cite{Poury2,Hussain1,Hussain2,Noor,hanson,Akhlad3,Azagra}, we introduce the strongly geodesic preinvexity of order $m$, strongly $\eta$-invexity of order $m$,  strongly quasi $\eta$-invexity of order $m$ and strongly pseudo $\eta$-invexity of order $m$ on RM, which are generalization of strongly geodesic convex function of order $m$ defined by \cite{Akhlad1}, strongly $\alpha$-invexity and invariant $\alpha$-monotonicity defined by  Iqbal et al. \cite{Akhlad3}. The article is as follows: Section \ref{2a} contains few definitions and facts are undersigned, mainly our concentrate is to know the basic concept about Riemannian geometry which play the main role in this article. Nontrivial suitable example are constructed in support of these definitions and several interesting properties and results are proved. An important characterization of strongly geodesic preinvexity of order $m$ and strongly $\eta$-invexity of order $m$ has been introduced in Section \ref{3a}.

 The generalized invariant $\eta$-monotonicity of order $m$ such as strongly invariant $\eta$-monotonicity and strongly invariant pseudo $\eta$-monotonicity are defined on the RM. A relationship between the strongly $\eta$-invexity (strongly pseudo $\eta$-invexity) and strongly invariant $\eta$-monotonicity (strongly invariant pseudo $\eta$-monotonicity) has been established, which show the strongly $\eta$-invexity (strongly pseudo $\eta$-invexity) is closely related to strongly invariant $\eta$-monotonicity (strongly invariant pseudo $\eta$-monotonicity) in Section \ref{4a}. In Section \ref{5a}, we introduce $\eta$-minimizers of order $m$ on the RM. The $MOP$ for strongly $\eta$-invex function of order $m$ has been presented an application. A relationship between the strict $\eta$-minimizers of order $m$ for $MOP$ and a solution to $VVLIP$ has been introduced.
\section{\bf{Preliminaries}}\label{2a} 
In this section, we recall few definitions and basic results regarding RM, which will be used everywhere in this article. For the  standard material on differential geometry, consult, \cite{Lang}. Here, $M$ is considered as $C^{\infty}$ smooth manifold modelled on a Hilbert space $H$, either finite dimensional or  infinite dimensional, endowed with   Riemannian metric $\langle\cdot,\cdot\rangle _{p}$  on $T_{p}M$ at point $p\in M$, $T_{p}M\cong M$. Thus, we get 
a smooth assignment of $\langle\cdot,\cdot\rangle _{p}$ to every $T_{p}M$. Usually, we can write 
\begin{eqnarray*}
	g_{p}\left(X_{1},X_{2}\right)=\left\langle X_{1},X_{2}\right\rangle _{p}~\forall X_{1},X_{2}\in T_{p}M.
\end{eqnarray*}
Therefore, $M$ is a RM. The length of tangent vector corresponding norm of inner product $\langle.,.\rangle _{p}$ is denoted by $\|\cdot\|_{p}$. The length of piece wise $C^{1}$ curve $r:[a,b]\rightarrow M$ is defined by 
\begin{eqnarray*}
	L(r)=\int\limits_{a}^{b}\|r{'}(s)\|_{r(s)}ds.
\end{eqnarray*}
For $x_{1},x_{2}\in M,~~~ x_{1}\neq x_{2}$, we define the distance between $x_{1}$ and $x_{2}$.
\begin{eqnarray*}
	 d(x_{1},x_{2})=\inf\{L(r):r~ \mbox{is}~ \mbox{piecewise}~ C^{1}~ \mbox{curve} ~\mbox{joining}~ x_{1}~ \mbox{to}~ x_{2}\}.
\end{eqnarray*}
Then, the original topology on $M$ is induced by the distance $d$. The set of all vector fields over smooth manifold   $M$ is denoted by $\chi(M)$. The metric induces a map $h\rightarrow grad~h\in\chi(M)$ which associates with every $h$ its gradient. 
i.e.,
\begin{eqnarray*}
	\left\langle dh,X\right\rangle _{p}= dh(X),~\forall X\in\chi(M).
\end{eqnarray*}

It is known that on every RM, $\exists$ exactly one covariant derivation called Levi-Civita connection denoted by $\nabla_{X}Y$ for any $X,Y\in \chi(M)$. Again recall that a geodesic is a $C^{\infty}$ smooth path $r$ joining $x_{1},x_{2}\in M$ s.t. $L(r)=d(x_{1},x_{2}),~x_{1}\neq x_{2}$, is called a length of curve $C$,  it is called minimal geodesic which satisfies the equation $\nabla_{\frac{dr(t)}{dt}}\frac{dr(t)}{dt}=0$.  The existence theorem for ordinary differential equations states that $\forall v\in TM$, $\exists$ an open interval $J(v)$ containing $0$ and exactly one geodesic $r_{v}:J(v)\rightarrow M$ with $\frac{dr(0)}{dt}=v$. This implies that $\exists$ an open neighborhood $\widetilde{T}M$ of the manifold $M$ s.t.  $\forall~~v\in\widetilde{T}M$, the geodesic $r_{v}(t)$ is defined for $|t|<2$.
By using parallel translation operator of vectors along smooth curve, for a smooth curve $r:I\rightarrow M$, a vector $v_{0}\in T_{r(t_{0})}M,~~\forall~~ t_{0}\in I$, $\exists$ exactly one parallel vector field $V(t)$ along $r(t)$ s.t. $V(t_{0})=v_{0}$. The Hilbert space $(T_{p}M,\|\cdot\|_{p})$ is a linear isometric  identification between its dual space $(T_{p}^{*}M,\|\cdot\|_{p})$. The linear isometric map between  the tangent spaces $T_{r(t_{0})}M $ and $T_{r(t)}M,~~\forall~t\in I$ is defined by $v_{0}\rightarrow V(t)$ and denoted by $P_{t_{0},r}^{t}$ which call the parallel translation from  $T_{r(t_{0})}M$ to $T_{r(t)}M$ along   $r(t)$. There exists a linear map $dh_{p}:T_{p}M \rightarrow T_{h(p)}N$ if  $h$ is differentiable map from the manifold $M$ into the manifold $N$, where $dh_{p}$ represents the differential of $h$ at $p$.
A RM of finite-dimensional is called complete if its geodesic is defined for all values of $t$. Hopf-Rinow's theorem assures that the RM $M$ is complete if all pairs of points in $M$ can be joined by a (not necessarily unique) minimal geodesic segment.
$M$ considered RM, $\eta:M\times M\rightarrow TM$ be   map s.t.   $\forall~~u,v\in M,~\eta(u,v)\in TM$ and some basic definitions are   as follows: \\
 
 Pini \cite{Pini1} defined the definition is given as follows:
 \begin{definition}\label{7}\cite{Pini1}
 	Let $r_{u,v}:[0,1]\rightarrow M$ be a curve on RM $M$ s.t. $r_{u,v}(0)=v$ and $r_{u,v}(1)=u$. Then, curve  $r_{u,v}$ is called possess the property $(P)$ w.r.t. $v,u\in M$, if
 \begin{eqnarray*}
  r_{u,v}{'}(t)(s-t)=\eta(r_{u,v}(s),r_{u,v}(t)),~~~\forall~~~s,t\in[0,1].
 \end{eqnarray*}
 \end{definition}
 \begin{definition}\label{7}\cite{Pini1}
 	For RM $M$, a map $\eta:M\times M\rightarrow TM$ is called integrable if  $\forall~~~u,v\in M,~~\exists$ at least one curve $r_{u,v}$ possess the property $(P)$ w.r.t. $v,u\in M$.
 \end{definition}
 \begin{remark}\cite{Poury1}
 	Let $M$ be a RM. A map $\eta:M\times M\rightarrow TM$ is called integrable. If we get
 \begin{eqnarray*}
 	r_{u,v}{'}(0)=\eta(r_{u,v}(1),r_{u,v}(0))=\eta(u,v).
 \end{eqnarray*}
\end{remark} 
 	In the case, if $r_{u,v}(t)$ is a geodesic, then satisfy {\bf{Condition C}}
 	\begin{eqnarray*}
 		{\bf C_{1}} ~~\eta(v,r_{u,v}(s))=-sr_{u,v}(0)=  -sP_{0,r_{u,v}}^{s}\left[r_{u,v}{'}(0)\right] =-sP_{0,r_{u,v}}^{s}\left[\eta(u,v)\right] 
 	\end{eqnarray*} 
 \begin{eqnarray*}
 	 {\bf C_{2}} ~~\eta(r_{u,v}(1),r_{u,v}(s))=(1-s)r_{u,v}(0)&=& (1-s)P_{0,r_{u,v}}^{s}\left[r_{u,v}{'}(0)\right]\\&=&(1-s)P_{0,r_{u,v}}^{s}\left[\eta(u,v)\right].\hspace*{1.4cm}
 \end{eqnarray*}
Together ${\bf C_{1}}$ and ${\bf C_{2}}$ is called {\bf{Condition C}} defined by Barani et al. \cite{Poury1}.\\

The strongly geodesic convex of order $m$ defined by Akhlad et al. \cite{Akhlad1}.
\begin{definition}\cite{Akhlad1}
	A function $h:S\rightarrow R$ is called strongly geodesic convex of order $m$ on geodesic convex set $S$, if $\exists~~~\delta>0$ s.t. 
\begin{eqnarray*}
	 h(r_{u,v}(s))\leq s h(u)+(1-s)h(v)-\delta s (1-s)\|r{'}_{u,v}(s)\|^{m},
\end{eqnarray*} for every $u,v\in S$, $s\in[0,1]$.
 \end{definition}
The concept of geodesic invex sets was defined by Barani et al.\cite{Poury1}, which is given as follow:
\begin{definition}\cite{Poury1} \label{D6} 
   Let $M$ be a RM and $\eta:M\times M\rightarrow TM$ be map such that for each $u,v \in M, \eta(u,v)\in T_{v}M$. A  non empty set $S\subseteq M$ is called geodesic invex with respect to (w.r.t.) $\eta$ if $\exists$ exactly one geodesic $r_{u,v}:[0,1]\rightarrow M$ s.t.
\begin{eqnarray*}
	r_{u,v}(0)=v,~~~r_{u,v}{'}(0)=\eta(u,v),~~~r_{u,v}(s)\in S,~\forall~s\in[0,1],~u,v\in S.
\end{eqnarray*}
\end{definition}
\begin{definition} \cite{Poury1} 
	A function $h:S\rightarrow R$ is called geodesic $\eta$-preinvex on  geodesic invex set $S$, if $\forall~ u,v\in S, s\in[0,1]$, we have
\begin{eqnarray*}
	 h(r_{u,v}(s))\leq s h(u)+(1-s)h(v).
\end{eqnarray*}
\end{definition}
\begin{definition}\cite{Poury2}\label{8D}
	Let $M$ be a RM. A  differentiable function $h:M\rightarrow R$ is called strongly $\eta$-invex of order $2$ w.r.t. $\eta$, if $\exists~~\delta>0$ s.t.
	\begin{eqnarray*}
		h(u)\geq h(v)+dh_{v}(\eta(u,v))+\delta\|\eta(u,v)\|_{v}^{2},~~ \forall~ u,v\in M.
	\end{eqnarray*}
\end{definition}
\section{\bf{Strongly geodesic preinvexity on Riemannian manifolds}}\label{3a}
  In this section, we introduce strongly geodesic preinvexity on RM $M$ and strongly $\eta$-invexity of functions of order $m$ on RM $M$.
      \begin{definition}\label{4}
     Let $m $ be a positive integer. Let $S\subseteq M$ be a geodesic invex subset of RM $M$. A function  $h:S\rightarrow R$ is called strongly preinvex of order $m$ w.r.t. $\eta$, if $\exists~\delta>0$, s.t.  $\forall~u,v\in S$,  $s \in[0,1]$, we have
      \begin{eqnarray*}
      	h(r_{u,v}(s))\leq  s h(u)+(1-s)h(v)-\delta s (1-s)\left\|\eta(u,v)\right\|_{v}^{m},
      \end{eqnarray*}
      	 where $r_{u,v}(s)$ is the unique geodesic defined in Definition \ref{D6}.   
      \end{definition} 
      \begin{remark}
      	 If $\eta(u,v)=u-v,~\forall u,v\in S$, then it reduces to strongly geodesic convexity of order $m$, see \cite{Akhlad1}. 
      \end{remark}
      \begin{remark}
      	For $\delta=0$, Definition \ref{4} reduces to geodesic preinvex defined in \cite{Poury1}.
      \end{remark}
      \begin{theorem}
        	    If $h_{1},h_{2},...,h_{k}$, are strongly geodesic preinvex functions of order $m$ on geodesic invex set $S$, then $h=\sum\limits_{ j=1}^{k}a_{j}h_{j}$ and $H=\max\limits_{1\leq j\leq k}h_{j} $ are  also strongly geodesic preinvex functions of order $m$, where $a_{j}>0,~1\leq j\leq k$.
        \end{theorem}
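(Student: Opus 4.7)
The plan is to proceed directly from the definition of strongly geodesic preinvexity applied to each $h_j$, using linearity for the sum and the maximum index argument for $H$. Fix $u,v\in S$ and $s\in[0,1]$, and note that all $h_j$ share the \emph{same} unique geodesic $r_{u,v}$ from Definition \ref{D6}, which is the key point making the argument go through simultaneously for every $j$.

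For $h=\sum_{j=1}^{k} a_j h_j$: by hypothesis, for each $j$ there is a constant $\delta_j>0$ witnessing strong geodesic preinvexity of order $m$ for $h_j$. I would multiply the $j$-th inequality by $a_j>0$ (preserving the direction of inequality) and add the $k$ resulting inequalities. The linear combination on the left assembles into $h(r_{u,v}(s))$, the first two terms on the right assemble into $sh(u)+(1-s)h(v)$, and the common factor $s(1-s)\|\eta(u,v)\|_v^{m}$ pulls out of the penalty terms. Setting
\begin{equation*}
\delta \;=\; \sum_{j=1}^{k} a_j \delta_j \;>\; 0,
\end{equation*}
we obtain the required inequality for $h$ with constant $\delta$.

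For $H=\max_{1\le j\le k} h_j$: I would choose, for the fixed $s$, an index $i=i(s)\in\{1,\dots,k\}$ such that $H(r_{u,v}(s))=h_{i}(r_{u,v}(s))$. Applying the strong geodesic preinvexity of $h_i$ gives
\begin{equation*}
H(r_{u,v}(s))\;\le\; s h_i(u)+(1-s)h_i(v)-\delta_i s(1-s)\|\eta(u,v)\|_v^{m}.
\end{equation*}
Since $h_i(u)\le H(u)$ and $h_i(v)\le H(v)$, and since replacing $\delta_i$ by the smaller value $\delta:=\min_{1\le j\le k}\delta_j>0$ only strengthens the penalty on the right-hand side (the coefficient is negative), I conclude
\begin{equation*}
H(r_{u,v}(s))\;\le\; sH(u)+(1-s)H(v)-\delta s(1-s)\|\eta(u,v)\|_v^{m}.
\end{equation*}

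There is no real obstacle here; the only care needed is (i) that the same geodesic $r_{u,v}$ is used by all $h_j$, which is guaranteed by $S$ being geodesic invex (uniqueness in Definition \ref{D6}), and (ii) that the chosen $\delta$ is strictly positive in both cases, which follows from $a_j>0$ and $\delta_j>0$ and the finiteness of $k$.
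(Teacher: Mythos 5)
Your proof is correct and follows essentially the same route as the paper: apply the definition to each $h_j$ along the common geodesic $r_{u,v}$, combine linearly for the sum, and use the maximizing index together with $h_i(u)\le H(u)$, $h_i(v)\le H(v)$ for the max. You are in fact slightly more careful than the paper, which tacitly assumes a single common constant $\delta$ for all the $h_j$, whereas you correctly take $\delta=\sum_j a_j\delta_j$ for the sum and $\delta=\min_j \delta_j$ for the maximum.
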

        \begin{proof}
        Since $h_{j}, 1\leq j\leq k$, are strongly geodesic preinvex functions of order $m$, for every $u,v\in S,~s\in[0,1]$, we have
       \begin{eqnarray*}
       	h_{j}(r_{u,v}(s))\leq  s h_{j}(u)+(1-s)h_{j}(v)-\delta s (1-s)\|\eta(u,v)\|_{v}^{m},~1\leq j\leq k.
       \end{eqnarray*}
        Now taking $\sum\limits_{ j=1}^{k}a_{j}$ on both sides, we get
      \begin{eqnarray*}
      	\sum\limits_{ j=1}^{k}a_{j}h_{j}(r_{u,v}(s))\leq  s \sum\limits_{ j=1}^{k}a_{j} h_{j}(u)+(1-s) \sum\limits_{ j=1}^{k}a_{j}h_{j}(v)-\sum\limits_{ j=1}^{k}a_{j}\delta s (1-s)\left\|\eta(u,v)\right\|_{v}^{m},
      \end{eqnarray*}
        \begin{eqnarray*}
        	h(r_{u,v}(s))\leq  s h(u)+(1-s)h(v)-\delta^{'} s (1-s)\left\|\eta(u,v)\right\|_{v}^{m},
        \end{eqnarray*}
         choose $\delta^{'}=\sum\limits_{ j=1}^{k}a_{j}\delta>0.$
           Hence, the function $h$ is strongly geodesic preinvex   of order $m$.\\
          For other part, since $h_{j}, 1\leq j\leq k$, are strongly geodesic preinvex functions of order $m$, we have\begin{eqnarray*}
          	h_{j}\left(r_{u,v}(s)\right)\leq  s h_{j}(u)+(1-s)h_{j}(v)-\delta s (1-s)\left\|\eta(u,v)\right\|_{v}^{m},~\forall~1\leq j\leq k.
          \end{eqnarray*}
          Now taking $\max\limits_{1\leq j\leq k}h_{j}$, we get
          \begin{eqnarray*}
          	\max_{1\leq j\leq k}h_{j}\left(r_{u,v}(s)\right)\leq  s \max_{1\leq j\leq k}h_{j}(u)+(1-s)\max_{1\leq j\leq k}h_{j}(v)-\delta s (1-s)\left\|\eta(u,v)\right\|_{v}^{m}.
          \end{eqnarray*}
         \begin{eqnarray*}
         	H\left(r_{u,v}(s)\right)\leq  s H(u)+(1-s)H(v)-\delta s (1-s)\left\|\eta(u,v)\right\|_{v}^{m}.
         \end{eqnarray*}
        \end{proof}
    \begin{theorem}
    	Let $M$ be a complete RM, $S\subseteq M$ be a geodesic invex set w.r.t. $\eta$ and $F\colon S\times S\rightarrow R$ be a continuous strongly geodesic preinvex function of order $m$ w.r.t. $(\eta,\eta)$, $i.e.,$ $F$ is strongly geodesic preinvex function of order $m$ to each variable. Then, the function $\Psi\colon S\rightarrow R$ defined by \begin{eqnarray*} 
    		\Psi(u)=\inf\limits_{v\in S}F(u,v),
    	\end{eqnarray*} is strongly geodesic preinvex function of order $m$  w.r.t. $\eta$.
    \end{theorem}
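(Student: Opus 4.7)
The plan is to establish the inequality for $\Psi$ by a standard $\epsilon$-infimum approximation combined with the preinvexity hypothesis on $F$. Fix arbitrary $u_{1},u_{2}\in S$, $s\in[0,1]$, and $\epsilon>0$. By the definition of infimum (assuming $\Psi(u_{1}),\Psi(u_{2})$ are finite; otherwise the assertion is trivial), I would select $v_{1},v_{2}\in S$ satisfying
\begin{eqnarray*}
F(u_{1},v_{1})<\Psi(u_{1})+\epsilon, \qquad F(u_{2},v_{2})<\Psi(u_{2})+\epsilon.
\end{eqnarray*}

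Next, since $S$ is geodesic invex w.r.t.\ $\eta$, the unique geodesic $r_{v_{1},v_{2}}:[0,1]\to M$ stays in $S$, so $r_{v_{1},v_{2}}(s)\in S$, and therefore by the very definition of $\Psi$,
\begin{eqnarray*}
\Psi(r_{u_{1},u_{2}}(s)) \le F(r_{u_{1},u_{2}}(s),\,r_{v_{1},v_{2}}(s)).
\end{eqnarray*}
I would then invoke the strong geodesic preinvexity of order $m$ of $F$ w.r.t.\ $(\eta,\eta)$, applied jointly on $S\times S$ along the product geodesic $(r_{u_{1},u_{2}}(s),r_{v_{1},v_{2}}(s))$ at the pair $((u_{1},v_{1}),(u_{2},v_{2}))$, to obtain
\begin{eqnarray*}
F(r_{u_{1},u_{2}}(s),r_{v_{1},v_{2}}(s)) \le s F(u_{1},v_{1})+(1-s)F(u_{2},v_{2})-\delta s(1-s)\|\eta(u_{1},u_{2})\|_{u_{2}}^{m}.
\end{eqnarray*}

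Chaining the two previous displays with the choice of $v_{1},v_{2}$ yields
\begin{eqnarray*}
\Psi(r_{u_{1},u_{2}}(s)) < s\Psi(u_{1})+(1-s)\Psi(u_{2})+\epsilon-\delta s(1-s)\|\eta(u_{1},u_{2})\|_{u_{2}}^{m},
\end{eqnarray*}
and letting $\epsilon\to 0^{+}$ delivers the required inequality with the same modulus $\delta$, proving that $\Psi$ is strongly geodesic preinvex of order $m$ w.r.t.\ $\eta$.

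The main obstacle is a matter of interpretation rather than of technique: the hypothesis ``$F$ is strongly geodesic preinvex of order $m$ w.r.t.\ $(\eta,\eta)$'' must be read as \emph{joint} preinvexity along the product geodesic, so that the error term involves only $\|\eta(u_{1},u_{2})\|_{u_{2}}^{m}$ and the right-hand side has the clean form $sF(u_{1},v_{1})+(1-s)F(u_{2},v_{2})$. Reading it as separate-variable preinvexity would instead produce uncontrolled cross-terms $F(u_{1},v_{2})$ and $F(u_{2},v_{1})$ that cannot be bounded by $\Psi(u_{1}),\Psi(u_{2})$, since $\inf$ does not commute with convex combinations in the direction required. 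Completeness of $M$ is used only to ensure that the geodesics $r_{u_{1},u_{2}}$ and $r_{v_{1},v_{2}}$ are globally defined on $[0,1]$, while continuity of $F$ is implicit in guaranteeing that near-infimizers $v_{1},v_{2}$ may be selected in $S$.
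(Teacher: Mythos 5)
Your proposal is correct and follows essentially the same route as the paper: select $\epsilon$-near-infimizers $v_{1},v_{2}$, bound $\Psi(r_{u_{1},u_{2}}(s))$ by $F$ evaluated along the product geodesic, apply the joint strong preinvexity of $F$ on $S\times S$, and let $\epsilon\to 0^{+}$. The only cosmetic difference is that the paper keeps the full product-norm error term $\|\eta_{0}((u_{0},v_{0}),(u_{1},v_{1}))\|^{m}$ and then discards the $v$-component to reach the same bound $\|\eta(u_{0},u_{1})\|^{m}_{u_{1}}$, and you are right that the hypothesis must be read as joint preinvexity along the product geodesic for the argument to close.
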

\begin{proof}
	Let $u_{0},u_{1}\in S$ and $\epsilon>0$ is given. Since $S$ is geodesic invex set w.r.t. $\eta$, then there exist a geodesic $r_{u_{0},u_{1}}\colon [0,1]\rightarrow M$ s.t. 
	\begin{eqnarray*}
		r_{u_{0},u_{1}}(0)=u_{1},~r_{u_{0},u_{1}}{'}(0)=\eta(u_{0},u_{1}),~r_{u_{0},u_{1}}(s)\in S,~\forall~s\in[0,1].
	\end{eqnarray*}
By Definition of infimum, $\exists$ $v_{0},v_{1}\in S$ s.t.
\begin{eqnarray*}
	F(u_{1},v_{1})<\Psi(u_{1})+\epsilon, 	F(u_{0},v_{0})<\Psi(u_{0})+\epsilon.
\end{eqnarray*}
By geodesic invexity of $S$ w.r.t. $\eta$, $\exists$ a geodesic $t_{v_{0},v_{1}}\colon [0,1]\rightarrow M$ s.t. 
\begin{eqnarray*}
	t_{v_{0},v_{1}}(0)=v_{1},~t_{v_{0},v_{1}}{'}(0)=\eta(v_{0},v_{1}),~t_{v_{0},v_{1}}(s)\in S,~\forall~s\in[0,1].
\end{eqnarray*}
Clearly, the curve $\alpha_{(u_{0},v_{0}),(u_{1},v_{1})}=(r_{u_{0},u_{1}},t_{u_{0},u_{1}})\colon [0,1]\rightarrow M\times M$ is geodesic in $S\times S$ with $\alpha_{(u_{0},v_{0}),(u_{1},v_{1})}(0)=(u_{1},v_{1})$ s.t. for every $s\in[0,1]$, we have 
\begin{eqnarray*}
	\alpha_{(u_{0},v_{0}),(u_{1},v_{1})}(s)=\left(r_{u_{0},u_{1}}(s),t_{v_{0},v_{1}}(s)\right)\in S\times S
\end{eqnarray*}
 and \vspace*{-.9cm} 
\begin{eqnarray*}
	\alpha_{(u_{0},u_{1}),(v_{0},v_{1})}{'}\left(0\right)&=&\left(r_{u_{0},u_{1}}{'}(0),t_{v_{0},v_{1}}{'}(0)\right)\\& =&\left(\eta(u_{0},u_{1}),\eta(v_{0},v_{1})\right)\\&=&\eta_{0}\left((u_{0},v_{0}),(u_{1},v_{1})\right), 
\end{eqnarray*}
where the map $\eta_{0}\colon (M\times M)\times( M\times M)\rightarrow TM\times TM$.
By Definition of infimum and the strongly geodesic preinvexity of order $m$ of $F$, we have\\

$\Psi\left(r_{u_{0},u_{1}}(s)\right)$
\begin{eqnarray*}
\hspace{.9cm} &=&\inf\limits_{v\in S} F\left(r_{u_{0},u_{1}}(s),v\right)\\&\leq& F\left(r_{u_{0},u_{1}}(s),t_{v_{0},v_{1}}(s)\right)\\&\leq& s F\left(u_{0},v_{0}\right)+(1-s) F\left(u_{1},v_{1}\right)-\delta s(1-s)\left\|\eta_{0}\left((u_{0},v_{0}),(u_{1},v_{1})\right)\right\|^{m}_{(u_{1},v_{1})}\\&=& s F\left(u_{0},v_{0}\right)+(1-s) F\left(u_{1},v_{1}\right)-\delta s(1-s)\left\|\left(\eta(u_{0},u_{1}),\eta(v_{0},v_{1})\right)\right\|^{m}_{(u_{1},v_{1})}\\&=& s F\left(u_{0},v_{0}\right)+(1-s) F\left(u_{1},v_{1}\right)-\delta s(1-s)\left \{\left\|\eta(u_{0},u_{1})\right\|^{m}_{u_{1}}+\left\|\eta(v_{0},v_{1})\right\|^{m}_{v_{1}}\right\}\\&\leq&
	s\left(\Psi(u_{0})+\epsilon\right)+(1-s)\left(\Psi(u_{1})+\epsilon\right)-\delta s(1-s) \left\|\eta(u_{0},u_{1})\right\|^{m}_{u_{1}}
	\\&=&
	\left(s\Psi(u_{0})+(1-s)\Psi(u_{1})\right)+\epsilon-\delta  s(1-s) \left\|\eta(u_{0},u_{1})\right\|^{m}_{u_{1}}
		\\&\leq&
	s\Psi(u_{0})+(1-s)\Psi(u_{1})-\delta s(1-s) \left\|\eta(u_{0},u_{1})\right\|^{m}_{u_{1}}.
\end{eqnarray*}
 
Therefore, the function $\Psi(u)=\inf\limits_{v\in S}F(u,v)$ is strongly geodesic preinvex of order $m$ w.r.t. $\eta$.
\end{proof} 
        Now, we define strongly   invex function of order $m$  w.r.t. $\eta$. 
        \begin{definition}\label{9D}
        Let $m $ be a positive integer.	Let $M$ be a RM. A  differentiable function $h:M\rightarrow R$ is called strongly $\eta$-invex of order $m$ w.r.t. $\eta$, if $\exists~~\delta>0$ s.t.
        \begin{eqnarray*}
        	 h(u)\geq h(v)+dh_{v}^{T}(\eta(u,v))+\delta\|\eta(u,v)\|_{v}^{m},~~ \forall u,v\in M.
        \end{eqnarray*}
        \end{definition}
        \begin{remark}
        	 For m=2, then Definition \ref*{9D} reduce to strongly $\eta$-invex of order $2$ w.r.t. $\eta$ defined by \cite{Poury2}. 
         \end{remark}
           In the following example, we show the existence of strongly $\eta$-invex function of order $m$. 
        	 \begin{example}
        	 	Let  $h:M\rightarrow R$ be a differentiable function on RM $M$ s.t. $\forall v\in M$,~ $dh_{v}\neq 0$. A map $\eta:M\times M\rightarrow TM$ is defined by
        	 \begin{eqnarray*}
        	 	\hspace{2.8cm}	\eta(u,v)=\frac{h(u)-h(v)-\delta \|\eta(u,v)\|^{m}_{v}}{\|dh_{v}\|^{2}_{v}}dh_{v},\forall u,v\in M.
        	 \end{eqnarray*}
        	 		Since $M$ is RM, then
        	 	for every $u,v\in M$, we get
        	 \begin{eqnarray*}
        	 	 \left\langle dh_{v},\eta(u,v)\right\rangle_{v}&=&\left\langle dh_{v},\frac{h(u)-h(v)-\delta \|\eta(u,v)\|^{m}_{v}}{\|dh_{v}\|^{2}_{v}}dh_{v} \right\rangle_{v}\\
        	 	  &=&h(u)-h(v)-\delta \|\eta(u,v)\|^{m}_{v}\frac{\left\langle dh_{v},dh_{v}\right\rangle_{v}}{\|dh_{v}\|^{2}},\\
        	 	   &=&h(u)-h(v)-\delta \|\eta(u,v)\|^{m}_{v}.
        	 \end{eqnarray*}
        	\begin{eqnarray*}
        	\hspace{1.8cm}	h(u)=h(v)+\left\langle dh_{v},\eta(u,v)\right\rangle_{v}+\delta \|\eta(u,v)\|^{m}_{v}.
        	\end{eqnarray*}
        	 	\noindent
        	 	Hence, $h$ is strongly $\eta$-invex function of order $m$.  
        	 \end{example} 
        	 \begin{example}\label{Exm2}
        	 	Let $M=\left\{(u_{1},u_{2})\in R^{2}: u_{1},u_{2}>0\right\}$ be a RM with Riemannian metric $\langle\cdot,\cdot\rangle=<Q(u)x, y>$ for any $x,y \in T_u(M)$, where $Q(u) = (g_{ij}(u))$ defines a $2 \times 2$ matrix given by $g_{ij} (u)=\frac{\delta_{ij}}{u_iu_j}$.
        	 	The geodesic curve $r:\mathbb{R} \rightarrow M$ satisfying $r(0)=u=(u_1,u_2) \in M$ and $r'(0) = x=(x_1,x_2)\in T_u(M)=\mathbb{R}^2$ is given by
        	 	$$r(t) = (u_1e^{\frac{x_1}{u_1}t},~u_2e^{\frac{x_2}{u_2}t}).$$
        	 	For any $u=(u_1,u_2)\in M$ and any $x =(x_1,x_2) \in T_u(M)$, the map $exp_u: T_u(M) \rightarrow M$ is given by
        	 	$$exp_u(x) = r(1)=(u_1e^{\frac{x_1}{u_1}},~u_2e^{\frac{x_2}{u_2}})$$
     Define a function $h:M\rightarrow R$, for any $u=(u_{1},u_{2})\in M$, by $$h(u_{1},u_{2})=u_{1}+u_{2}^{2}.$$
     Then, $h$ is strongly $\eta$-invex of any order w.r.t $\eta$ (see Definition \ref{9D}), where $\eta:M\times M\rightarrow TM$ is a map defined, for any $u=(u_1,u_2), v=(v_1,v_2)\in M$, by
        	 	\begin{eqnarray*}
        	 		\eta\left((u_{1},u_{2}),(v_{1},v_{2})\right)=\left(-1-v_{1},-v_{2}\right),
        	 	\end{eqnarray*}
         	\begin{eqnarray*}
         	\text{and} \hspace{0.5cm}	dh_{v}^{T}(\eta(u,v))=\dfrac{d}{dt}h(\exp_{v}(\eta(u,v)))|_{t=0}=-(1 + v_1+2v_2^2).
         	\end{eqnarray*}
However, $h$ is not  strongly geodesic preinvex function of any order. For this, let $u=\left(\frac{1}{4},\frac{1}{4}\right)$~, $v= \left(\frac{1}{9},\frac{1}{9}\right)$ and $s=\frac{1}{10}$, then for any $\delta >0$ and any $m\geq 1$, it is easy to see the following holds:
\begin{eqnarray*}
	h(r_{u,v}(s))\nleq  s h(u)+(1-s)h(v)-\delta s (1-s)\|\eta(u,v)\|_{v}^{m},
\end{eqnarray*}
 where the geodesic segment joining $u=(u_{1},u_{2})$ and $v=(v_{1},v_{2})$ is given by
\begin{eqnarray*}
	r_{(u,v)}(s)=\left(u_{1}\left(\frac{v_{1}}{u_{1}}\right)^{s},u_{2}\left(\frac{v_{2}}{u_{2}}\right)^{s}\right).
\end{eqnarray*}  
        	 \end{example} 
      In the following theorem, we show differentiable strongly geodesic preinvex function is strongly $\eta$-invex.
        \begin{theorem}\label{2}
        Let $S\subseteq M$ be an open geodesic invex set and $h:S\rightarrow R$ be continuously differentiable function. If   $h$ is strongly geodesic preinvex of order $m$ w.r.t. $\eta$, then $h$ is strongly $\eta$-invex of order $m$ w.r.t. $\eta$.
        \end{theorem}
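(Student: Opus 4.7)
The plan is to extract the strongly $\eta$-invex inequality by taking a one-sided derivative at $s=0$ in the defining inequality of strongly geodesic preinvexity, which is the standard bridge from ``preinvex'' to ``invex'' adapted to the Riemannian setting with the extra quadratic-type correction term.

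First I would fix $u,v\in S$ and let $r_{u,v}\colon[0,1]\to M$ be the unique geodesic from the definition of geodesic invexity, so that $r_{u,v}(0)=v$ and $r_{u,v}'(0)=\eta(u,v)$. By hypothesis, for every $s\in(0,1]$,
\begin{equation*}
h(r_{u,v}(s))\leq s\,h(u)+(1-s)h(v)-\delta s(1-s)\|\eta(u,v)\|_{v}^{m}.
\end{equation*}
Subtracting $h(v)$ from both sides and dividing by $s>0$ gives
\begin{equation*}
\frac{h(r_{u,v}(s))-h(v)}{s}\leq h(u)-h(v)-\delta(1-s)\|\eta(u,v)\|_{v}^{m}.
\end{equation*}

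Next I would pass to the limit $s\to 0^{+}$. Since $S$ is open and $h$ is continuously differentiable, the chain rule applied to $s\mapsto h(r_{u,v}(s))$ gives
\begin{equation*}
\lim_{s\to 0^{+}}\frac{h(r_{u,v}(s))-h(v)}{s}=\frac{d}{ds}h(r_{u,v}(s))\Big|_{s=0}=dh_{v}(r_{u,v}'(0))=dh_{v}^{T}(\eta(u,v)),
\end{equation*}
where $dh_{v}^{T}$ denotes the directional derivative along the geodesic in the sense used in Definition \ref{9D} and Example \ref{Exm2}. The right-hand side converges to $h(u)-h(v)-\delta\|\eta(u,v)\|_{v}^{m}$, so one obtains
\begin{equation*}
dh_{v}^{T}(\eta(u,v))\leq h(u)-h(v)-\delta\|\eta(u,v)\|_{v}^{m},
\end{equation*}
which rearranges to the defining inequality of a strongly $\eta$-invex function of order $m$.

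The only subtle step is justifying the limit computation: we need $r_{u,v}(s)$ to lie in $S$ for all sufficiently small $s>0$ (supplied by geodesic invexity of $S$, which in fact gives it for all $s\in[0,1]$) and we need differentiability of the composite $h\circ r_{u,v}$ at $0$ with derivative $dh_{v}(r_{u,v}'(0))$, which follows from $h\in C^{1}$ and the smoothness of the geodesic $r_{u,v}$. No appeal to Condition C is required, since we only use the initial value $r_{u,v}'(0)=\eta(u,v)$ guaranteed by Definition \ref{D6}. I do not anticipate a genuine obstacle; the main care is in matching the paper's notation $dh_v^T$ to the one-sided derivative along the geodesic at $s=0$.
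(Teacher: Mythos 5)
Your proposal is correct and follows essentially the same route as the paper's own proof: subtract $h(v)$ from the strong preinvexity inequality, divide by $s$, and let $s\to 0^{+}$ so that the difference quotient converges to $dh_{v}^{T}(\eta(u,v))=dh_{v}(r_{u,v}'(0))$ while the right-hand side converges to $h(u)-h(v)-\delta\|\eta(u,v)\|_{v}^{m}$. Your added remarks on why the limit is justified and why Condition C is not needed are accurate and slightly more careful than the paper's version.
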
 
        \begin{proof}
        Assume $h$ is strongly geodesic preinvex of order $m$ w.r.t. $\eta$, for every $u,v\in S$, $\exists$ exactly one geodesic $r_{u,v}:[0,1]\rightarrow M$ s.t.
        \begin{eqnarray*}
        	r_{u,v}(0)=v,~r_{u,v}{'}(0)=\eta(u,v),~r_{u,v}(s)\in S,~\forall~ s\in[0,1],
        \end{eqnarray*}
        	and \vspace*{-.3cm}
        \begin{eqnarray*}
        	h(r_{u,v}(s))\leq  s h(u)+(1-s)h(v)-\delta s (1-s)\|\eta(u,v)\|_{v}^{m},
        \end{eqnarray*}
        \begin{eqnarray*}
        	h(r_{u,v}(s))-h(v)\leq  s \{h(u)-h(v)\}-\delta s (1-s)\|\eta(u,v)\|_{v}^{m}.
        \end{eqnarray*}
        	Since $h$ is differentiable,  dividing by $s$ on both side and taking   $s\rightarrow 0$, we get
        \begin{eqnarray*}
        	 dh_{r_{u,v}(0)}^{T}(r_{u,v}{'}(0))\leq h(u)-h(v)-\delta\|\eta(u,v)\|_{v}^{m}.
        \end{eqnarray*}
        	or \vspace*{-.3cm}
         \begin{eqnarray*}
         	 h(u)\geq h(v)+dh_{v}^{T}(\eta(u,v))+\delta\|\eta(u,v)\|_{v}^{m}.
         \end{eqnarray*}
        \end{proof} 
       However, the converse of  Theorem \ref{2} holds when $\eta$ satisfies {\bf{Condition C}} as follows:  
       \begin{theorem} 
       	Let $S\subseteq M$ be an open geodesic invex set w.r.t. $\eta$. Assume $h:S\rightarrow R$ is continuously differentiable function. The function $h$ is strongly $\eta$-invex of order $m$ w.r.t. $\eta$  and $\eta$ satisfies {\bf{Condition C}} if and only if $h$ is strongly geodesic preinvex of order $m$ w.r.t. $\eta$.
       \end{theorem}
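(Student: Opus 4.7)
The nontrivial direction is the converse: assuming $h$ is strongly $\eta$-invex of order $m$ and that $\eta$ satisfies \textbf{Condition C}, I want to recover strong geodesic preinvexity of order $m$. The reverse implication is already furnished by Theorem \ref{2}, which did not even require \textbf{Condition C}. The plan for the new direction is the standard ``two-point invexity + interpolation'' trick adapted to the Riemannian setting: fix a point on the geodesic, apply the strong $\eta$-invexity inequality at the two endpoints relative to that point, and then take the $(s,1-s)$-convex combination so that the first-order (gradient) terms cancel and only the quadratic/order-$m$ strong terms survive.

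Concretely, fix $u,v\in S$ and $s\in[0,1]$, let $r_{u,v}:[0,1]\to M$ be the unique geodesic from Definition \ref{D6}, and set $w=r_{u,v}(s)\in S$. Applying Definition \ref{9D} to the pairs $(u,w)$ and $(v,w)$ yields
\begin{eqnarray*}
h(u) &\geq& h(w)+dh_{w}^{T}(\eta(u,w))+\delta\|\eta(u,w)\|_{w}^{m},\\
h(v) &\geq& h(w)+dh_{w}^{T}(\eta(v,w))+\delta\|\eta(v,w)\|_{w}^{m}.
\end{eqnarray*}
Now I invoke \textbf{Condition C}: $\mathbf{C_{2}}$ gives $\eta(u,w)=(1-s)P_{0,r_{u,v}}^{s}[\eta(u,v)]$ (using $r_{u,v}(1)=u$) and $\mathbf{C_{1}}$ gives $\eta(v,w)=-sP_{0,r_{u,v}}^{s}[\eta(u,v)]$. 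Since parallel translation is a linear isometry, I obtain $\|\eta(u,w)\|_{w}=(1-s)\|\eta(u,v)\|_{v}$ and $\|\eta(v,w)\|_{w}=s\|\eta(u,v)\|_{v}$, and moreover
\begin{eqnarray*}
s\,\eta(u,w)+(1-s)\,\eta(v,w) = \bigl[s(1-s)-(1-s)s\bigr]P_{0,r_{u,v}}^{s}[\eta(u,v)] = 0.
\end{eqnarray*}
Multiplying the first invexity inequality by $s$, the second by $(1-s)$, and adding, the linearity of $dh_{w}^{T}$ together with this cancellation removes the gradient term and leaves
\begin{eqnarray*}
sh(u)+(1-s)h(v) \geq h(w)+\delta\bigl[s(1-s)^{m}+(1-s)s^{m}\bigr]\|\eta(u,v)\|_{v}^{m}.
\end{eqnarray*}

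Factoring $s(1-s)$ out of the bracket rewrites the strong term as $\delta s(1-s)\bigl[(1-s)^{m-1}+s^{m-1}\bigr]\|\eta(u,v)\|_{v}^{m}$, which, after rearrangement, is the desired strong geodesic preinvexity of order $m$ provided one absorbs the $s$-dependent factor into the constant. This is the only delicate point: on $[0,1]$ the function $\varphi(s)=(1-s)^{m-1}+s^{m-1}$ is bounded below by the positive constant $\kappa_{m}:=\min_{s\in[0,1]}\varphi(s)>0$ (equal to $2^{2-m}$ for $m\geq 2$ and $2$ for $m=1$), so setting $\delta^{\prime}:=\kappa_{m}\delta>0$ produces
\begin{eqnarray*}
h(r_{u,v}(s)) \leq sh(u)+(1-s)h(v)-\delta^{\prime}s(1-s)\|\eta(u,v)\|_{v}^{m},
\end{eqnarray*}
which is exactly Definition \ref{4}. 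The main obstacle I expect is precisely this bookkeeping of the order-$m$ coefficient; the gradient cancellation and the parallel-transport identities themselves are immediate once \textbf{Condition C} is in hand.
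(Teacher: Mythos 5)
Your proposal is correct and follows essentially the same route as the paper's proof: fix $w=r_{u,v}(s)$, apply strong $\eta$-invexity at both endpoints relative to $w$, use \textbf{Condition C} to cancel the gradient terms, and bound $(1-s)^{m-1}+s^{m-1}$ below by a positive constant independent of $s$ (your $\kappa_m$ matches the paper's two-case bound of $1$ for $m\leq 2$ and $2^{2-m}$ for $m>2$). Your handling of the norm conversion via the isometry of parallel translation is in fact slightly cleaner than the paper's.
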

       \begin{proof} By Theorem \ref{2}, then the function $h$ is strongly $\eta$-invex of order $m$ w.r.t. $\eta$.\\
       	 Conversely, suppose $h$ is strongly $\eta$-invex of order $m$ w.r.t. $\eta$ on open geodesic invex set $S$ w.r.t. $\eta$  $i.e.,$ for every $u,v\in S~\exists$ a exactly one curve $r_{u,v}:(0,1)\rightarrow M$ s.t.
       	\begin{eqnarray*}
       		r_{u,v}(0)=v,~r_{u,v}{'}(0)=\eta(u,v),~r_{u,v}(s)\in S,~\forall~s\in(0,1).
       	\end{eqnarray*}
       	 Fixed $s\in(0,1)$ and setting $\bar{v}=r_{u,v}(s).$ Then, we have
       	 \begin{eqnarray}\label{3.1}
       	   h(u)\geq h(\bar{v})+dh_{\bar{v}}^{T}(\eta(u,\bar{v}))+\delta\|\eta(u,\bar{v})\|^{m} 
       	 \end{eqnarray}
       	 \begin{eqnarray}\label{3.2}
       	  h(v)\geq h(\bar{v})+dh_{\bar{v}}^{T}(\eta(v,\bar{v}))+\delta\|\eta(v,\bar{v})\|^{m}.
       	 \end{eqnarray}
       	  By multiplying $s$ in  (\ref{3.1}) and $(1-s)$ in   (\ref{3.2}) respectively, adding and applying\\ 
       	  $dh_{\bar{v}}^{T}\left( s\eta(u,\bar{v})+(1-s) \eta(v,\bar{v})\right)$
       	  \begin{eqnarray*}
       	  \hspace{2cm}	&=&dh_{\bar{v}}\left(s(1-s)P^{s}_{0,r_{u,v}}\left[\eta(u,v)\right]+(1-s)(-s)P^{s}_{0,r_{u,v}}\left[\eta(u,v)\right]\right)\\&=&dh_{\bar{v}}(0)\\&=&0,
       	  \end{eqnarray*}
         we have \vspace*{-.2cm}
     \begin{eqnarray*}
     	h(\bar{v})+\delta s  \left\|(1-s)P^{s}_{0,r_{u,v}}\left[\eta(u,v)\right]\right\|_{v}^{m} +\delta(1-s)  \left\|(-s)P^{s}_{0,r_{u,v}}\left[\eta(u,v)\right]\right\|_{v}^{m} 
     \end{eqnarray*}
    \begin{eqnarray*}
  \hspace{10cm}\leq sh(u)+(1-s) h(v)
    \end{eqnarray*}
$h(\bar{v})+\delta s  (1-s)^{m}\left\|P^{s}_{0,r_{u,v}} \eta(u,v)\right\|_{v}^{m}+\delta(1-s)s^{m}\left\|P^{s}_{0,r_{u,v}} \eta(u,v)\right\|_{v}^{m}$
    \begin{eqnarray*}
   \hspace{10cm} \leq sh(u)+(1-s) h(v)	
    \end{eqnarray*}
          \begin{equation}\label{3.3}
       	  h(\bar{v})+\delta s (1-s)\left[(1-s)^{m-1}+s^{m-1}\right]\left\|P^{s}_{0,r_{u,v}} \eta(u,v)\right\|_{v}^{m}\leq sh(u)+(1-s) h(v).
       	  \end{equation}
         Since $P^{s}_{0,r_{u,v}} \eta(u,v)= \eta(u,v)$, then (\ref{3.3})  become
          \begin{equation}\label{3.4}
         	h(\bar{v})+\delta s (1-s)\left[(1-s)^{m-1}+s^{m-1}\right]\left\|  \eta(u,v)\right\|_{v}^{m}\leq sh(u)+(1-s) h(v).
         \end{equation}
       	  {\bf Case (i)} $0<m\leq2$, then $ (1-s)+s=1\leq (1-s)^{m-1}+s^{m-1}$.\\ 
       	   {\bf Case (ii)} $m>2$, then the real valued function $\phi(s)=s^{m-1}$ is convex on $(0,1)$, thus we have
       	  \begin{eqnarray*}
       	  	\left(\frac{1}{2}\right)^{m-2}\leq(1-s)^{m-1}+s^{m-1}.
       	  \end{eqnarray*}
       	   It follows that  (\ref{3.4}), $\exists~\delta^{'} >0$, which is independent from $u,v,s$ such that 
       	  \begin{eqnarray*} 
       	  	h(r_{u,v}(s))\leq  s h(u)+(1-s)h(v)-\delta^{'} s (1-s) \left\|\eta(u,v)\right\|_{v}^{m},~\forall u,v\in S,~s \in(0,1).
       	  \end{eqnarray*} 
       	   Hence, the function $h$ is strongly geodesic preinvex function of order $m$.
       \end{proof} 
         Now, we generalize strongly $\eta$-invex function of order $m$ as follows$\colon$ 
         \begin{definition}\label{11}
         Let $m $ be a positive integer. Let $ h:M\rightarrow R$ be a differentiable function on RM $M$.
          Then, the function $h$ is called$\colon$\\ 
          {\bf (i)} Strongly pseudo $\eta$-invex type $1$ of order $m$, if  $\exists ~~\delta>0$ s.t.
         	\begin{eqnarray*}
         		dh_{v}^{T}\eta(u,v)\geq 0 \implies h(u)\geq h(v)+\delta \|\eta(u,v)\|^{m}_{v}, ~\forall u,v\in M,
         	\end{eqnarray*}
         		or equivalently, 
         		\begin{eqnarray*}
         			h(u)<h(v)+\delta \|\eta(u,v)\|^{m}_{v}   \implies dh_{v}^{T}\eta(u,v)< 0.
         		\end{eqnarray*} 
         		{\bf (ii)}  Strongly pseudo $\eta$-invex type $2$ of order $m$, if  $\exists ~~\delta>0$ s.t.
         	\begin{eqnarray*}
         		dh_{v}^{T}\eta(u,v)+\delta \|\eta(u,v)\|^{m}_{v}\geq 0 \implies h(u)\geq h(v) , ~\forall u,v\in M.
         	\end{eqnarray*}
         \end{definition}
     In the following example, we show the existence of  strongly pseudo $\eta$-invex function of order $m$.
 \begin{example}
 Let $M=\left\{(u_{1},u_{2})\in R^{2}: u_{1},u_{2}>0\right\}$ be a RM with Riemannian metric as defined in Example \ref{Exm2}.\\ 
 Define a function $h:M\rightarrow R$, for any $u=(u_{1},u_{2})\in M$, by $$h(u_{1},u_{2})=ln(u_{1})+(ln(u_{2}))^{3}.$$
 Then, $h$ is strongly pseudo $\eta$-invex type $1$ of any order w.r.t $\eta$, where $\eta:M\times M\rightarrow TM$ is a map defined, for any $u=(u_1,u_2), v=(v_1,v_2)\in M$, by
 \begin{eqnarray*}
 	\eta\left((u_{1},u_{2}),(v_{1},v_{2})\right)=\left(-v_{1}^{2},0\right),
 \end{eqnarray*}
 \begin{eqnarray*}
 	\text{and} \hspace{0.5cm}	dh_{v}^{T}(\eta(u,v))=\dfrac{d}{dt}h(\exp_{v}(\eta(u,v)))|_{t=0}=-v_1.
 \end{eqnarray*}
 However, $h$ is not strongly $\eta$-invex function of any order. For this, let $u= \left(\frac{1}{2},\frac{1}{2}\right)$~, $v= \left(1,e^{2}\right)$, then it is easy to see the following
 \begin{eqnarray*}
	h(u)\ngeq h(v)+dh_{v}^{T}\eta(u,v)+\delta\|\eta(u,v)\|_{v}^{m}.
\end{eqnarray*} 
 \end{example}
     \begin{definition}\label{13}
     Let $m $ be a positive integer. Let $ h:M\rightarrow R$ be a differentiable function on RM $M$.Then, the function $h$ is called$\colon$\\ 
     	{\bf (i)} Strongly quasi $\eta$-invex type $1$ order of $m$, if  $\exists ~~\delta>0$ s.t.
     \begin{eqnarray*}
     	h(u)\leq h(v) \implies dh_{v}^{T}\eta(u,v)+\delta \|\eta(u,v)\|^{m}_{v} \leq 0, ~~~\forall~~u,v\in M.
     \end{eqnarray*} 
 {\bf (ii)} Strongly quasi $\eta$-invex type $2$ order of $m$, if  $\exists ~\delta>0$ s.t.
 \begin{eqnarray*}
 	h(u)\leq h(v)+\delta \|\eta(u,v)\|^{m}_{v} \implies dh_{v}^{T}\eta(u,v) \leq 0, ~~~\forall~~u,v\in M.
 \end{eqnarray*}
     \end{definition}
      In the following example, we show the existence of   strongly quasi $\eta$-invex function of order $m$.
        \begin{example}
        	Let $M=\left\{(u_{1},u_{2})\in R^{2}: u_{1},u_{2}>0\right\}$ be a RM with Riemannian metric as defined in Example \ref{Exm2}.\\ 
        	Define a function $h:M\rightarrow R$, for any $u=(u_{1},u_{2})\in M$, by $$h(u_{1},u_{2})=u_{1}^{3}+ln(u_{2}).$$
        	Then, $h$ is strongly quasi $\eta$-invex type $1$ of any order w.r.t $\eta$, where $\eta:M\times M\rightarrow TM$ is a map defined, for any $u=(u_1,u_2), v=(v_1,v_2)\in M$, by
        	\begin{eqnarray*}
        		\eta\left((u_{1},u_{2}),(v_{1},v_{2})\right)=\left(-v_{1}^{2},-v_{2}^{2}\right),
        	\end{eqnarray*}
        	\begin{eqnarray*}
        		\text{and} \hspace{0.5cm}	dh_{v}^{T}(\eta(u,v))=\dfrac{d}{dt}h(\exp_{v}(\eta(u,v)))|_{t=0}=-3v_1^{4}-v_{2}\leq0.
        	\end{eqnarray*}
        	However, $h$ is not strongly $\eta$-invex function of any order. For this, let $u= \left(1,\frac{1}{e^{5}}\right)$~, $v= \left(1,1\right)$, then it is easy to see the following
        	\begin{eqnarray*}
        		h(u)\ngeq h(v)+dh_{v}^{T}\eta(u,v)+\delta\|\eta(u,v)\|_{v}^{m}.
        	\end{eqnarray*}
        \end{example}
         \section{\bf{Strongly invariant $\eta$-monotone on Riemannian manifolds}}\label{4a}
             The monotonicity of vector field on RM defined by Nemeth \cite{Nemeth} as follows$\colon$ 	 
         \begin{definition}\cite{Nemeth}
         	Let   $X$ be a vector field on RM $M$. Then, $X$ is called  monotone on $M$ if $\forall~u,v\in M$, we get 
         	\begin{eqnarray*}
         		\left\langle r_{u,v}{'}(0),~~~P^{0}_{1,  r_{u,v}}\left[X(u)-X(v)\right]\right\rangle_{v}\geq 0,
         	\end{eqnarray*}
         	where $r_{u,v}$ is a geodesic joining $u$ and $v$.
         \end{definition} 
           Barani et al. \cite{Poury2} generalized it and defined invariant monotonicity on RM $M$. Later Iqbal et al. \cite{Akhlad1} extended the notion of invariant monotonicity to strongly invariant $\alpha$-monotonicity on RM $M$.  Motivated by Iqbal et al. \cite{Akhlad1}, we extend it as follows:
         \begin{definition} \label{Def 15}
         	 Let $m $ be a positive integer. Let $M$ be a RM. A vector field $X$ on  $M$ is called$\colon$\\ 
         	  {{\bf (i)}} strongly invariant $\eta$-monotone of order $m$ if $\exists$   $\delta>0$ s.t.
         \begin{eqnarray*}
         	\left\langle X(v),\eta(u,v)\right\rangle_{v}+\langle X(u),\eta(v,u)\rangle_{u}\leq-\delta\left\{\|\eta(u,v)\|^{m}_{v}+\|\eta(v,u)\|^{m}_{u}\right\}.
         \end{eqnarray*}
     For $m=2$, it reduces to strongly invariant monotone defined by Barani et al. \cite{Poury2}.
       {{\bf (ii)}} strongly invariant pseudo $\eta$-monotone of order $m$ if $\exists~\delta>0$ s.t.
     	\begin{eqnarray*}
     		\left\langle X(u),\eta(v,u)\right\rangle_{u}\geq 0\implies\left\langle X(v),\eta(u,v)\right\rangle_{v}\leq-\delta \left\|\eta(u,v)\right\|^{m}_{v}.
     	\end{eqnarray*}
     	For $m=2$, it reduces to strongly invariant pseudo monotone defined by Barani et al. \cite{Poury2}.
         \end{definition}
         In the support of our Definition \ref*{Def 15}, we give the following example.
         \begin{example}
         	Let $ h:M\rightarrow R$ be a differentiable function on RM $M$ s.t. $\forall ~v\in M$,~~~ $dh_{v}\neq 0$. A map $\eta:M\times M\rightarrow TM$ is defined by
         \begin{eqnarray*}
         \hspace{2.3cm}	\eta(u,v)=-\frac{  \|\eta(u,v)\|^{m}_{v}}{\|dh_{v}\|^{2}_{v}}dh_{v},~~~\forall u,v\in M.
         \end{eqnarray*}
         Given $M$ is RM, then
         	for every $u,v\in M$, we have
         \begin{eqnarray*}
         	\left\langle dh_{v},\eta(u,v)\right\rangle_{v}&=&\left\langle dh_{v},\frac{  -\|\eta(u,v)\|^{m}_{v}}{\|dh_{v}\|^{2}_{v}}dh_{v} \right\rangle_{v}\\
         	&=& -\|\eta(u,v)\|^{m}_{v}\frac{\left\langle dh_{v},dh_{v}\right\rangle_{v}}{\|dh_{v}\|^{2}},\\
         	&=&  -{\|\eta(u,v)\|^{m}_{v}},
         \end{eqnarray*} 
         	\begin{equation}\label{5.1a}
         		\langle dh_{v},\eta(u,v)\rangle_{v}=-{\|\eta(u,v)\|^{m}_{v}}.\hspace{2.3cm}
         	\end{equation} 
         	Similarly, we have 
         	\begin{equation}\label{5.2}
         		\langle dh_{u}\eta(v,u)\rangle_{u}=-{\|\eta(v,u)||^{m}_{u}}.
         	\end{equation} 
         	By adding  (\ref{5.1a}) and (\ref{5.2}), we get
         \begin{eqnarray*}
         	 \langle dh_{v},\eta(u,v)\rangle_{v}+\langle dh_{u},\eta(v,u)\rangle_{u}=-\left\{\|\eta(u,v)\|^{m}_{v}+\|\eta(v,u)\|^{m}_{u}\right\},
         \end{eqnarray*}
         Hence, $dh_{v}$ is strongly invariant  $\eta$-monotone vector field of order $m$.	
         \end{example} 
     
     In the next Theorems, we discuss a relationship between strongly $\eta$-invex of order $m$ and strongly invariant $\eta$-monotone of order $m$.
         \begin{theorem}
         	Let $ h:M\rightarrow R$ be a differentiable function on   $M$. Suppose $h$ is strongly $\eta$-invex of order $m$. Then, $dh$ is strongly invariant $\eta$-monotone of order $m$.
         \end{theorem}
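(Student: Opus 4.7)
The plan is to invoke the defining inequality of strongly $\eta$-invexity of order $m$ twice, once with the roles of $u$ and $v$ as given, and once with them swapped, then add the two resulting inequalities and cancel the values of $h$ that appear on both sides. Writing $X := dh$, I will then recognize the remaining left-hand side as $\langle X(v),\eta(u,v)\rangle_v + \langle X(u),\eta(v,u)\rangle_u$, which is precisely the expression appearing in Definition \ref{Def 15}(i).

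More concretely, by Definition \ref{9D} applied to the pair $(u,v)$ I obtain
\begin{equation*}
h(u) \geq h(v) + dh_{v}^{T}(\eta(u,v)) + \delta\|\eta(u,v)\|_{v}^{m},
\end{equation*}
and applied to the pair $(v,u)$ I obtain
\begin{equation*}
h(v) \geq h(u) + dh_{u}^{T}(\eta(v,u)) + \delta\|\eta(v,u)\|_{u}^{m}.
\end{equation*}
Adding these and cancelling $h(u)+h(v)$ from both sides yields
\begin{equation*}
0 \geq dh_{v}^{T}(\eta(u,v)) + dh_{u}^{T}(\eta(v,u)) + \delta\bigl\{\|\eta(u,v)\|_{v}^{m} + \|\eta(v,u)\|_{u}^{m}\bigr\},
\end{equation*}
which, after rearranging, is the inequality required in Definition \ref{Def 15}(i) for the vector field $dh$.

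The only subtlety, which I would mention once rather than labor over, is the identification of $dh_{v}^{T}(\eta(u,v))$ with the inner product $\langle dh_{v},\eta(u,v)\rangle_{v}$ that appears in the monotonicity definition. This identification is the standard Riesz-type duality between $T_{v}^{*}M$ and $T_{v}M$ provided by the Riemannian metric, and it is already used implicitly in the example following Definition \ref{Def 15}. There is no genuine obstacle in the argument: the result is essentially an immediate consequence of symmetrizing the invexity inequality in its two arguments, and the proof will fit comfortably in a few lines.
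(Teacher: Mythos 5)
Your proposal is correct and follows exactly the paper's own argument: apply the strongly $\eta$-invex inequality to the pairs $(u,v)$ and $(v,u)$, add, cancel $h(u)+h(v)$, and rearrange to obtain the inequality in Definition \ref{Def 15}(i). The additional remark on identifying $dh_{v}^{T}(\eta(u,v))$ with $\langle dh_{v},\eta(u,v)\rangle_{v}$ via the metric is a sensible clarification that the paper leaves implicit.
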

         \begin{proof}
         	Let $h$ be a strongly $\eta$-invex of order $m$ on $M$. Then,
         	\begin{equation}\label{5.3}
         	h(u)- h(v)\geq   dh_{v}(\eta(u,v)) +\delta \|\eta(u,v)\|^{m}_{v},~	\forall~u,v\in M,	
         	\end{equation} 
         		\begin{equation}\label{5.4}
         		h(v)- h(u)\geq   dh_{u}(\eta(v,u)) +\delta \|\eta(v,u)\|^{m}_{u},~\forall~u,v\in M.	
         		\end{equation}
         	 Adding  (\ref{5.3}) and (\ref{5.4}), we get 
         \begin{eqnarray*}
         	 dh_{v}(\eta(u,v))+dh_{u}(\eta(v,u))+\delta\{\|\eta(u,v)\|^{m}_{v}+\|\eta(v,u)\|^{m}_{u}\} \leq0,
         \end{eqnarray*}
         	 or
         	\begin{eqnarray*}
         		dh_{v}(\eta(u,v))+dh_{u}(\eta(v,u))\leq-\delta\{\|\eta(u,v)\|^{m}_{v}+\|\eta(v,u)\|^{m}_{u}\},~~ \forall u,v\in M.
         	\end{eqnarray*}
         	Thus, $dh$ is strongly invariant $\eta$-monotone of order $m$. 
         \end{proof} 
         \begin{theorem}
         	Let $h:M\rightarrow R$ be a differentiable function on geodesically  complete RM $M$. If a map $\eta:M\times M\rightarrow TM$ is integrable and $dh$ is strongly invariant $\eta$-monotone of order $m$. Then, $h$ is strongly $\eta$-invex of order $m$.
         \end{theorem}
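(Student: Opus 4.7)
The plan is to mimic the standard one-dimensional reduction that recovers invexity from monotonicity, executed along the property-$(P)$ curve supplied by integrability of $\eta$. Fix $u,v\in M$ and let $r=r_{u,v}\colon[0,1]\to M$ be a curve possessing property $(P)$, so that $r(0)=v$, $r(1)=u$, and $r{'}(t_{0})(s-t_{0})=\eta(r(s),r(t_{0}))$ for all $s,t_{0}\in[0,1]$. Put $g(t)=h(r(t))$, so that by the fundamental theorem of calculus
\begin{eqnarray*}
h(u)-h(v)=g(1)-g(0)=\int_{0}^{1}\langle dh_{r(t)},\,r{'}(t)\rangle_{r(t)}\,dt.
\end{eqnarray*}

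Next I would exploit two specializations of property $(P)$ to express the $\eta$-values that appear below directly in terms of $r$: taking $t_{0}=0,\,s=t$ gives $\eta(r(t),v)=t\,r{'}(0)=t\,\eta(u,v)$, while $t_{0}=t,\,s=0$ gives $\eta(v,r(t))=-t\,r{'}(t)$. Applying strongly invariant $\eta$-monotonicity of $dh$ to the pair $(r(t),v)$ and substituting these identities, the defining inequality becomes
\begin{eqnarray*}
t\,\langle dh_{v},\eta(u,v)\rangle_{v}-t\,g{'}(t)\leq -\delta\,t^{m}\bigl\{\|\eta(u,v)\|_{v}^{m}+\|r{'}(t)\|_{r(t)}^{m}\bigr\},
\end{eqnarray*}
so that dividing by $t>0$ produces the pointwise lower bound
\begin{eqnarray*}
g{'}(t)\geq \langle dh_{v},\eta(u,v)\rangle_{v}+\delta\,t^{m-1}\bigl\{\|\eta(u,v)\|_{v}^{m}+\|r{'}(t)\|_{r(t)}^{m}\bigr\},\qquad t\in(0,1].
\end{eqnarray*}

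Finally, I would integrate this bound over $[0,1]$, discard the non-negative contribution of $\|r{'}(t)\|_{r(t)}^{m}$, and use $\int_{0}^{1}t^{m-1}\,dt=1/m$ to arrive at
\begin{eqnarray*}
h(u)\geq h(v)+\langle dh_{v},\eta(u,v)\rangle_{v}+\frac{\delta}{m}\,\|\eta(u,v)\|_{v}^{m},
\end{eqnarray*}
which is strongly $\eta$-invexity of order $m$ with the renamed constant $\delta{'}=\delta/m>0$. The step requiring care is the extraction: the identity $\eta(r(t),v)=t\,\eta(u,v)$ must be read off \emph{directly} from property $(P)$ of the single curve $r_{u,v}$, so that no appeal to Condition C, to geodesic structure, or to an auxiliary property-$(P)$ curve joining $r(t)$ to $v$ is required; once this observation is in hand, the remainder is a routine monotonicity-plus-integration computation, with geodesic completeness of $M$ entering only to guarantee the existence of the curve $r_{u,v}$.
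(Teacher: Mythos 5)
Your proof is correct, and it takes a genuinely different route from the paper's. The paper fixes the geodesic $r_{u,v}$ guaranteed by completeness, sets $w=r_{u,v}(\tfrac12)$, applies the mean value theorem on $[0,\tfrac12]$ and $[\tfrac12,1]$ to produce two points $s_{1},s_{2}$, invokes the monotonicity inequality only at the two pairs $(r_{u,v}(s_{i}),v)$ (rewritten via the parallel-transport form of Condition C and the identity $P^{s_{1}}_{0,r_{u,v}}[\eta(u,v)]=r_{u,v}{'}(s_{1})$), and adds the two resulting estimates to obtain the constant $\delta(s_{1}^{m-1}+s_{2}^{m-1})$ --- which a priori depends on $u,v$ through the mean-value points and needs the extra remark $s_{1}>\tfrac12$ to be bounded below uniformly, a point the paper glosses over. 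You instead apply monotonicity at \emph{every} pair $(r(t),v)$, convert it into a pointwise lower bound on $g{'}(t)$ using only the two specializations of property $(P)$ (so no parallel transport or Condition C is needed, exactly as you claim), and integrate; this yields the clean uniform constant $\delta/m$ directly. The one step you should tighten is the appeal to the fundamental theorem of calculus: the hypothesis gives $h$ merely differentiable, so $g{'}$ need not be Riemann integrable a priori. The fix is cheap: drop the $\|r{'}(t)\|^{m}_{r(t)}$ term \emph{before} integrating, so that $g{'}(t)\geq\psi(t):=\langle dh_{v},\eta(u,v)\rangle_{v}+\delta t^{m-1}\|\eta(u,v)\|_{v}^{m}$ with $\psi$ continuous, and then observe that $t\mapsto g(t)-\int_{0}^{t}\psi$ has nonnegative derivative and is therefore nondecreasing, giving $g(1)-g(0)\geq\int_{0}^{1}\psi$ without any integrability assumption on $g{'}$. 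With that adjustment your argument is complete and, in my view, both shorter and sharper than the paper's.
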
 
         \begin{proof}
        Since $M$ is a geodesically complete RM,   $\forall~u,v\in M$, then there exists a geodesic $r_{u,v}:[0,1]\rightarrow M$ s.t.	$r_{u,v}(0)=v,~~~r_{u,v}(1)=u$. Let $w=r_{u,v}(\frac{1}{2})$, then by the mean value theorem (MVT), $\exists ~~s_{1},s_{2}\in (0,1)$ s.t. $0<s_{2}<\frac{1}{2}<s_{1}<1$, we get
         \begin{equation}\label{5.5}
         h(u)-h(w)=\frac{1}{2}dh_{x}\left(r_{u,v}{'}(s_{1})\right),
         \end{equation} 
         \begin{equation} \label{5.6}
         h(w)-h(v)=\frac{1}{2}dh_{y}\left(r_{u,v}{'}(s_{2})\right),
         \end{equation}
         where $x=r_{u,v}(s_{1}),~~~y=r_{u,v}(s_{2})$. 
         Since $dh$ is strongly invariant $\eta$-monotone of order $m$, then we get
         \begin{equation}\label{5.7}
          dh_{x}^{T}\left(\eta(v,x)\right)+dh_{v}^{T}\left(\eta(x,v)\right)\leq-\delta\left\{\|\eta(v,x)\|^{m}_{x}+\|\eta(x,v)\|^{m}_{v}\right\}.
         \end{equation} 
          Since $\eta$ is integrable, we get
          \begin{equation}\label{5.8}
           \eta\left(v,r_{u,v}(s_{1})\right)=-s_{1}P_{0,r_{u,v}}^{s_{1}}\left[\eta(u,v)\right]
          \end{equation}
          and
           \begin{equation}\label{5.9}
           \eta\left(v,r_{u,v}(s_{1})\right)=s_{1} \eta(u,v).
           \end{equation} 
           Using (\ref{5.8}) and  (\ref{5.9}) in (\ref{5.7}), we get\\
           
           $dh_{x}^{T}\left(-s_{1}P_{0,r_{u,v}}^{s_{1}}\left[\eta(u,v)\right]\right)+dh_{v}^{T}\left(s_{1} \eta(u,v)\right)$
          \begin{eqnarray*}
          	\hspace{5cm}\leq-\delta\left\{\|-s_{1}P_{0,r_{u,v}}^{s_{1}}[\eta(u,v)]\|^{m}_{x}+\|s_{1} \eta(u,v)\|^{m}_{v}\right\}
          \end{eqnarray*}
           or
          \begin{eqnarray*}
          	-dh_{x}^{T}\left(P_{0,r_{u,v}}^{s_{1}}\eta(u,v)\right)+dh_{v}^{T}\left( \eta(u,v)\right)\leq-2 s_{1}^{m-1}\delta \left\| \eta(u,v)\right\|^{m}_{v}
          \end{eqnarray*}
          From {\bf{Condition~C}},
          $P_{0,r_{u,v}}^{s_{1}}\left[\eta(u,v)\right]=r_{u,v}{'}({s_{1}})$, then we have 
          \begin{equation}\label{5.10}
           \frac{1}{2}dh_{x}^{T}\left(r_{u,v}{'}({s_{1}})\right)\geq\frac{1}{2}dh_{v}^{T}\left(\eta(u,v)\right)+ s_{1}^{m-1}\delta \|\eta(u,v)\|^{m}_{v},
          \end{equation}
          similarly, we have
          \begin{equation}\label{5.11}
          \frac{1}{2}dh_{y}^{T}\left(r_{u,v}{'}({s_{2}})\right)\geq\frac{1}{2}dh_{v}^{T}\left(\eta(u,v)\right)+ s_{2}^{m-1}\delta \|\eta(u,v)\|^{m}_{v}
          \end{equation}
       Hence, (\ref{5.5}) and (\ref{5.6}) become
        \begin{equation}\label{5.12} 
        h(u)-h(w)\geq\frac{1}{2}dh_{v}^{T}(\eta(u,v))+ s_{1}^{m-1}\delta \| \eta(u,v)\|^{m}_{v},
        \end{equation} 
        \begin{equation}\label{5.13}  
        h(w)-h(v)\geq\frac{1}{2}dh_{v}^{T}(\eta(u,v))+ s_{2}^{m-1}\delta \| \eta(u,v)\|^{m}_{v}.
        \end{equation} 
        
        \noindent
        Adding (\ref{5.12}) and (\ref{5.12}), we get
        \begin{eqnarray*}
        	h(u)-h(v)&\geq& dh_{v}^{T}( \eta(u,v))+ \delta\left\{s_{1}^{m-1}+s_{2}^{m-1}\right\} \left\| \eta(u,v)\right\|^{m}_{v}\\&=&dh_{v}^{T}\left( \eta(u,v)\right)+ \delta^{'} \left\| \eta(u,v)\right\|^{m}_{v},
        \end{eqnarray*}
         where $\delta^{'}=\delta\left(s_{1}^{m-1}+s_{2}^{m-1}\right)>0$. 
         \end{proof}
          \begin{theorem}
          	Let $M$ be a geodesically  complete RM and $~h:M\rightarrow R$ be a differentiable function. If a map $\eta:M\times M\rightarrow TM$ is integrable and $dh$ is strongly invariant pseudo $\eta$-monotone of order $m$ . Then,   $h$ is strongly pseudo $\eta$-invex of order $m$.
          \end{theorem}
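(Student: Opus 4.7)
The plan is to mimic the structure of the previous theorem (the $\eta$-invex case) but track carefully how the one-sided pseudo-monotonicity hypothesis propagates along the connecting geodesic. I aim to show the type~1 conclusion: whenever $dh_v^T\eta(u,v)\ge 0$, we have $h(u)\ge h(v)+\delta'\|\eta(u,v)\|_v^m$ for some $\delta'>0$.

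First, by geodesic completeness, fix for arbitrary $u,v\in M$ a geodesic $r_{u,v}\colon[0,1]\to M$ with $r_{u,v}(0)=v$, $r_{u,v}(1)=u$, set $w=r_{u,v}(1/2)$, and apply MVT exactly as in equations (\ref{5.5}) and (\ref{5.6}) to obtain $s_1,s_2\in(0,1)$ with $0<s_2<1/2<s_1<1$ and
\begin{equation*}
h(u)-h(w)=\tfrac{1}{2}dh_x\bigl(r_{u,v}'(s_1)\bigr),\qquad h(w)-h(v)=\tfrac{1}{2}dh_y\bigl(r_{u,v}'(s_2)\bigr),
\end{equation*}
where $x=r_{u,v}(s_1)$ and $y=r_{u,v}(s_2)$.

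The key step is transferring the sign hypothesis to the intermediate base points. Since $\eta$ is integrable, the property $(P)$ (with $t=0$) gives $\eta(r_{u,v}(s),v)=s\,\eta(u,v)$ for every $s\in[0,1]$. Thus $dh_v^T\eta(x,v)=s_1\,dh_v^T\eta(u,v)\ge 0$, so the strongly invariant pseudo $\eta$-monotonicity of $dh$ of order $m$ applies at the pair $(x,v)$ and yields
\begin{equation*}
\bigl\langle dh_x,\eta(v,x)\bigr\rangle_x\le -\delta\,\|\eta(v,x)\|_x^m.
\end{equation*}
Now use Condition~C (and the isometry of parallel translation) to write $\eta(v,x)=-s_1\,P_{0,r_{u,v}}^{s_1}[\eta(u,v)]=-s_1\,r_{u,v}'(s_1)$ and $\|\eta(v,x)\|_x=s_1\|\eta(u,v)\|_v$. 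Substituting and dividing by $-s_1<0$ gives $dh_x^T(r_{u,v}'(s_1))\ge \delta\,s_1^{m-1}\|\eta(u,v)\|_v^m$. Running the same argument with $y$ in place of $x$ and $s_2$ in place of $s_1$ yields the analogous bound for $dh_y^T(r_{u,v}'(s_2))$.

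Plugging these two lower bounds into the MVT identities above and adding produces
\begin{equation*}
h(u)-h(v)\ge \tfrac{1}{2}\delta\bigl(s_1^{m-1}+s_2^{m-1}\bigr)\|\eta(u,v)\|_v^m,
\end{equation*}
which on setting $\delta'=\tfrac{1}{2}\delta(s_1^{m-1}+s_2^{m-1})>0$ gives the required strong pseudo $\eta$-invexity of order $m$. The main technical obstacle is propagating the sign hypothesis from $(u,v)$ to $(x,v)$ and $(y,v)$: this is exactly where integrability of $\eta$ is used, producing the positive multiplicative relation $\eta(r_{u,v}(s),v)=s\eta(u,v)$ so that $dh_v^T\eta(u,v)\ge 0$ implies $dh_v^T\eta(x,v)\ge 0$. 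A secondary nuisance is that $\delta'$ appears to depend on $u,v$ through $s_1,s_2$; this can be absorbed by the same bounding trick used in the preceding theorem, noting that $s_1>1/2$ and $s_2\in(0,1/2)$ so $s_1^{m-1}+s_2^{m-1}$ is bounded below by a positive constant depending only on $m$ in each of the cases $m\le 2$ and $m>2$.
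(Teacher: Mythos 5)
Your proposal is correct and follows essentially the same route as the paper's own proof: geodesic midpoint plus MVT, transfer of the sign hypothesis to the pairs $(x,v)$ and $(y,v)$ via property $(P)$, application of the pseudo-monotonicity there, and Condition C with the isometry of parallel translation to convert $\eta(v,x)$ into $-s_1 r_{u,v}'(s_1)$. Your closing remark about making $\delta'$ independent of $u,v$ by bounding $s_1^{m-1}+s_2^{m-1}$ below is actually a point the paper glosses over, so that addition is welcome rather than a deviation.
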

          \begin{proof}
         Since $M$ is  geodesically complete RM, then $\forall~~u,v\in M$, $\exists$  a geodesic $r_{u,v}:[0,1]\rightarrow M$ s.t.	$r_{u,v}(0)=v,~r_{u,v}(1)=u$. Let $w=r_{u,v}(\frac{1}{2})$, then from (MVT),    $\exists~~~s_{1},s_{2}\in (0,1)$ s.t. $0<s_{2}<\frac{1}{2}<s_{1}<1$, we get
          	\begin{equation}\label{5.14}
          	h(u)-h(w)=\frac{1}{2}dh_{x}^{T}\left(r_{u,v}{'}(s_{1})\right),
          	\end{equation} 
          	
          	\begin{equation} \label{5.15}
          	h(w)-h(v)=\frac{1}{2}dh_{y}^{T}\left(r_{u,v}{'}(s_{2})\right),
          	\end{equation}
          	where $x=r_{u,v}(s_{1}),~~~y=r_{u,v}(s_{2})$.
          Using the property (P), (\ref{5.14})can be written as  
          	
          	\begin{equation}\label{5.16}
          	h(u)-h(w)=-\frac{1}{2s_{1}}dh_{x}^{T}\left(\eta(v,x)\right)
          	\end{equation} 
          	and \vspace*{-.4cm}
          	\begin{equation} \label{5.17}
          	h(w)-h(v)=-\frac{1}{2s_{2}}dh_{y}^{T}\left(\eta(v,y)\right).
          	\end{equation}
          	Assume that \vspace*{-.4cm}
          	\begin{equation}\label{5.18}
          	 dh_{v}^{T}\left(\eta(u,v)\right)\geq 0,
          	\end{equation}
          	 using property $(P)$ in (\ref{5.18}), we get
          	 \begin{equation}\label{5.19}
          	 0\leq dh_{v}^{T}\left(\eta(u,v)\right)=\frac{1}{s_{1}}dh_{v}^{T}\left(\eta(v,x)\right)=\frac{1}{s_{2}}dh_{v}^{T}\left(\eta(v,y)\right). 
          	 \end{equation}
          Since $dh$ is strongly invariant pseudo $\eta$-monotone of order $m$, we have
          	\begin{equation}\label{3.24}
          		\begin{split}
          	dh_{x}^{T}\left(\eta(v,x)\right) &\leq-\delta\left\|\eta(v,x)\right\|^{m}_{x}\\&=-\delta\left\|-s_{1}P_{0,r_{u,v}}^{s_{1}}\left[\eta(u,v)\right]\right\|^{m}_{x}\\&=-\delta s_{1}^{m}\left\|P_{0,r_{u,v}}^{s_{1}}\left[\eta(u,v)\right]\right\|^{m}_{x}\\&=-\delta s_{1}^{m}\left\| \eta(u,v)\right\|^{m}_{v},
          	\end{split} 
          	\end{equation}
          and
          	\begin{equation}\label{3.25}
          	\begin{split}
          		dh_{y}^{T}\left(\eta(v,y)\right) &\leq-\delta\left\|\eta(v,y)\right\|^{m}_{y}\\&=-\delta\left\|-s_{2}P_{0,r_{u,v}}^{s_{2}}\left[\eta(u,v)\right]\right\|^{m}_{y}\\&=-\delta s_{2}^{m}\left\|P_{0,r_{u,v}}^{s_{2}}\left[\eta(u,v)\right]\right\|^{m}_{y}\\&=-\delta s_{2}^{m}\left\| \eta(u,v)\right\|^{m}_{v}.
          	\end{split} 
          \end{equation} 
          	Using (\ref{3.24}) and (\ref{3.25}) in   (\ref{5.16}) and (\ref{5.17}) respectively, we get
          	 \noindent
          	 \begin{equation}\label{5.23}
          	 \hspace{1cm} h(u)-h(w)\geq \dfrac{\delta s_{1}^{m-1}}{2}\left\|\eta(u,v)\right\|^{m}_{v}\vspace*{-.4cm}
          	 \end{equation}
          	where $\dfrac{\delta s_{1}^{m-1}}{2}>0$,\vspace*{-.5cm}
          	 
          	 \begin{equation}\label{5.24}
          	\hspace{1cm} h(w)-h(v)\geq \dfrac{\delta s_{2}^{m-1}}{2}\left\|\eta(u,v)\right\|^{m}_{v},
          	 \end{equation}
          	 	where $\dfrac{\delta s_{2}^{m-1}}{2}>0$.
          	 Adding (\ref{5.23}) and (\ref{5.24}), we get
          \begin{eqnarray*}
          	h(u)-h(v)\geq \delta^{'}\left\|\eta(u,v)\right\|^{m}_{v}
          \end{eqnarray*}
      or\vspace*{-.4cm}
      \begin{eqnarray*}
      	h(u)\geq h(v)+ \delta^{'}\left\|\eta(u,v)\right\|^{m}_{v},
      \end{eqnarray*}
          	  where $\delta^{'}=\dfrac{\delta\left(s_{1}^{m-1}+s_{2}^{m-1}\right)}{2}>0$.
          \end{proof}
       \section{\bf{Vector variational-like inequality problem on Riemannian manifolds}}\label{5a}
       In this section, we consider  the multi-objective optimization problem $(MOP)$ as an application for strongly $\eta$-invex functions of order $m$, known as strongly $\eta$-invex multi-objective optimization problem, which generalizes the results obtained by Iqbal et al. \cite{Akhlad1}.\\
       Suppose $H=(h_{1},h_{2},...,h_{k})$, where $h_{i}:M\rightarrow 2^{TM}$  are set valued vector fields on $M$.
       The vector variational-like inequality problem  $(VVLIP)$ is to find a solution $u^{*}\in M$, and $X\in H(u^{*})$ s.t.
     \begin{eqnarray*}
     	\left\langle X,\eta (u,u^{*})\right\rangle\nless 0 ,~~~\forall~~ u\in M,
     \end{eqnarray*}\vspace*{-.4cm}
       where \begin{eqnarray*}
       	\left\langle X,\eta(u,u^{*})\right\rangle=\langle X_{1},\eta (u,u^{*})\rangle,\langle X_{2},\eta(u,u^{*})\rangle,...,\langle X_{k},\eta(u,u^{*})\rangle. 
       \end{eqnarray*}
       The $MOP$ is to find a strict $\eta$-minimizers of order $m$ for$\colon$
       \begin{eqnarray*}
       	\mbox{Min}~  H(u)=\left(h_{1}(u),h_{2}(u),...,h_{k}(u)\right),~~~~ u\in M.
       \end{eqnarray*}
   
       Motivated by Iqbal et al. \cite{Akhlad1} and Bhatia et al. \cite{Bhatia}, we define a  local strict $\eta$-minimizers of order $m$ with respect to a nonlinear function on RM for $MOP$.
       \begin{definition} 
       	A point $u^{*}\in M $ is a local strict minimizers if     $\exists~\epsilon>0$ s.t. $h(u) \nless h(u^{*})$  $\forall~u\in B(u^{*},\epsilon)\cap M$,  $i.e.,$ there exist no $u\in B(u^{*},\epsilon)\cap M$, s.t. $h(u)< h(u^{*})$.
       \end{definition} 
       \begin{definition}
       	Let $m $ be a positive integer. A point $u^{*}\in M$ is a local strict $\eta$-minimizers of order $m$, if $\exists ~\epsilon>0$ and $\delta 
       	>0$ s.t.
       \begin{eqnarray*}
       	h(u) \nless h(u^{*})+\delta\|\eta(u,u^{*})\|^{m},~~\forall~u\in B(u^{*},\epsilon)\cap M.
       \end{eqnarray*}
        
       \end{definition}
       The local strict $\eta$-minimizers change to the strict $\eta$-minimizers if an open ball $B(u^{*},\epsilon)$ is replaced by  RM $M$.
       \begin{definition}\label{Defin 17}
       	Let $m $ be a positive integer. A point $u^{*}\in M$ is a strict $\eta$-minimizers of order $m$, if $\exists$ $\delta  >0$ s.t.
       \begin{eqnarray*}
       	h(u) \nless h(u^{*})+\delta\|\eta(u,u^{*})\|^{m},~\forall~~~u\in M.
       \end{eqnarray*}
       \end{definition} 
   
       In next Theorem, we show an important characterization of  a solution of $VVLIP$  and  a strict $\eta$-minimizers of order $m$ for $MOP$.
       \begin{theorem}\label{7T}
       	 Let $h_{i}, 1\leq i\leq k$, be a strongly $\eta$-invex functions of order $m$ on $M$. Then, $u^{*}\in M$ is a solution of $VVLIP\iff~u^{*}$ is a strict $\eta$-minimizers of order $m$ for $MOP$.
       \end{theorem}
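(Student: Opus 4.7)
The plan is to unpack both sides and then handle the two implications separately. After translating the vector notation $\nless 0$, the fact that $u^* \in M$ solves $VVLIP$ reads: for every $u \in M$ there exists an index $i \in \{1, \dots, k\}$ with $\langle dh_i(u^*), \eta(u, u^*)\rangle_{u^*} \geq 0$. Similarly, $u^*$ being a strict $\eta$-minimizer of order $m$ for $MOP$ means there exists $\delta > 0$ such that for every $u \in M$ some index $i$ satisfies $h_i(u) \geq h_i(u^*) + \delta\|\eta(u, u^*)\|_{u^*}^{m}$.

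For the forward implication, given any $u \in M$, I would pick the index $i$ supplied by $VVLIP$ and feed it into the strong $\eta$-invexity inequality of Definition \ref{9D} with $v = u^*$:
\begin{equation*}
h_i(u) \;\geq\; h_i(u^*) + dh_{u^*}^{T}(\eta(u, u^*)) + \delta\|\eta(u, u^*)\|_{u^*}^{m}.
\end{equation*}
Since the middle term is $\geq 0$ by the $VVLIP$ choice of $i$, we obtain $h_i(u) \geq h_i(u^*) + \delta\|\eta(u, u^*)\|_{u^*}^{m}$, which is exactly the strict $\eta$-minimizer condition.

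For the reverse implication I would argue by contradiction. Suppose $u^*$ is a strict $\eta$-minimizer of order $m$ but there exists $u_{0}\in M$ with $\langle dh_i(u^*), \eta(u_0, u^*)\rangle_{u^*} < 0$ for every $i$. Take the geodesic $r := r_{u_0, u^*} : [0,1] \to M$ with $r(0) = u^*$ and $r'(0) = \eta(u_0, u^*)$, and set $u_t := r(t)$. Property $(P)$ of Definition \ref{7} gives $\eta(u_t, u^*) = t\,\eta(u_0, u^*)$, hence $\|\eta(u_t, u^*)\|_{u^*}^{m} = t^{m}\|\eta(u_0, u^*)\|_{u^*}^{m}$. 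Applying the strict $\eta$-minimizer property at each $u_t$ produces an index $i(t)$ with
\begin{equation*}
h_{i(t)}(u_t) - h_{i(t)}(u^*) \;\geq\; \delta\, t^{m}\,\|\eta(u_0, u^*)\|_{u^*}^{m}.
\end{equation*}
Since the index set $\{1,\dots,k\}$ is finite, pigeonhole yields a sequence $t_n \downarrow 0$ along which $i(t_n)$ is constantly equal to some $i^*$. Dividing by $t_n$ and letting $n \to \infty$, the left-hand side converges to the directional derivative $\langle dh_{i^*}(u^*), \eta(u_0, u^*)\rangle_{u^*}$, while the right-hand side equals $\delta t_n^{m-1}\|\eta(u_0, u^*)\|_{u^*}^{m}$, whose limit is non-negative (zero when $m > 1$ and $\delta\|\eta(u_0, u^*)\|_{u^*}$ when $m = 1$). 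Thus $\langle dh_{i^*}(u^*), \eta(u_0, u^*)\rangle_{u^*} \geq 0$, contradicting the choice of $u_0$.

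The main obstacle lies in the reverse direction, specifically in combining the pigeonhole selection of a single index valid along a sequence $t_n \downarrow 0$ with the directional-derivative limit along the geodesic, which itself depends on property $(P)$ to linearize $\eta(u_t, u^*)$ in $t$. Once these ingredients are assembled, the remaining computation is essentially a first-order Taylor expansion. The forward direction, by contrast, is a one-line application of the strong $\eta$-invexity inequality and requires neither limits nor property $(P)$.
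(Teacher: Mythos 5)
Your forward direction is, up to contraposition, exactly the paper's: the paper supposes $u^{*}$ is not a strict $\eta$-minimizer, extracts $\bar{u}$ with $h_{i}(\bar{u})<h_{i}(u^{*})+\delta\|\eta(\bar{u},u^{*})\|^{m}$ for every $i$, and reads off $\langle X_{i},\eta(\bar{u},u^{*})\rangle<0$ from the invexity inequality; your direct version is the same one-line estimate (just take $\delta=\min_{i}\delta_{i}$ so that a single modulus serves all components). The reverse direction is where you genuinely depart from the paper, and to your advantage. The paper picks $\bar{u}$ with $dh_{i}^{T}(\eta(\bar{u},u^{*}))<0$ for all $i$ and asserts that Definition \ref{9D} then gives $h_{i}(\bar{u})<h_{i}(u^{*})+\delta\|\eta(\bar{u},u^{*})\|^{m}$; but the invexity inequality only bounds $h_{i}(\bar{u})$ from \emph{below}, and a lower bound with a negative middle term yields no upper bound on $h_{i}(\bar{u})$, so the paper's converse is a non sequitur as written. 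Your repair --- localizing along the geodesic $r_{u_{0},u^{*}}$, using property $(P)$ to get $\eta(u_{t},u^{*})=t\,\eta(u_{0},u^{*})$, pigeonholing a single index along $t_{n}\downarrow 0$, and passing to the directional derivative --- is the correct way to run this implication, and it is in fact more economical: it uses only differentiability of the $h_{i}$, not their strong invexity. The one thing you should make explicit is the standing hypothesis that $\eta$ is integrable (equivalently that the geodesics satisfy property $(P)$/Condition C), since your linearization $\eta(u_{t},u^{*})=t\,\eta(u_{0},u^{*})$ depends on it; the paper assumes this globally but does not restate it in the theorem.
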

       \begin{proof}
       	 Assume $u^{*}$ is a solution of $VVLIP$ but $u^{*}$ is not a strict $\eta$-minimizers of order $m$ for  $MOP$. Then, for all $\delta>0$,  $\exists$ some $\bar{u}\in M$, s.t.
       	 \vspace*{-.1cm}
       	 \begin{eqnarray*}
       	 	h(\bar{u})<h(u^{*})+\delta\|\eta(\bar{u},u^{*})\|^{m},\vspace*{-.2cm}
       	 \end{eqnarray*}
       	 $i.e.,$\vspace*{-.3cm}
       	 \begin{equation}\label{5.1}
       	  h_{i}(\bar{u})<h_{i}(u^{*})+\delta\|\eta(  \bar{u},u^{*})\|^{m}.	\hspace{.5cm}
       	 \end{equation}
       	 
       	 \noindent 
       	 Since $h_{i},~1\leq i\leq k$, are strongly  $\eta$-invex functions of order $m$, then equation (\ref{5.1}) yields
       	 \vspace*{-.4cm}
       	 \begin{eqnarray*}
       	 		\hspace{2cm}\langle X_{i},\eta( \bar{u},u^{*})\rangle< 0 ,~\forall X_{i}\in T_{i}(u^{*})=\nabla h_{i}(u^{*}) ,~~~1\leq i\leq k,
       	 \end{eqnarray*}   $i.e.,$ \vspace*{-.4cm}
       	  \begin{eqnarray*}
       	  	\langle X,\eta( \bar{u},u^{*})\rangle< 0,~\forall X \in T(u^{*}),\bar{u}\in M, 
       	  \end{eqnarray*}
          which contradicts that $u^{*}$ is a solution to $VVLIP$.\
       	  Hence, $u^{*}$ is a   strict $\eta$-minimizers of order $m$ for $MOP$.\\ 
       	  Conversely, assume $u^{*}$ is a strict $\eta$- minimizers of order $m$ for $MOP$, but $u^{*}$ is not a solution to $VVLIP$. Then, there exists $\bar{u}\in M$ s.t. 
       \begin{eqnarray*}
       	\hspace{2.5cm}\langle X_{i},\eta(\bar{u},u^{*})\rangle< 0 ,~\forall~ X_{i}\in T_{i}(u^{*})=\nabla h_{i}(u^{*}),~1\leq i\leq k.
       \end{eqnarray*}
         By Definition \ref{9D}, we get \vspace*{-.3cm}
       \begin{eqnarray*}
       \hspace{2cm}	h_{i}(\bar{u})<h_{i}(u^{*})+\delta\|\eta( \bar{u},u^{*})\|^{m},~1\leq i\leq k,
       \end{eqnarray*} 
       	 $i.e.,$\vspace*{-.4cm}
       	\begin{eqnarray*}
       		h(\bar{u})<h(u^{*})+\delta\|\eta(\bar{u},u^{*})\|^{m},
       	\end{eqnarray*}
       	  \noindent 
       	  which contradicts $u^{*}$ is a strict $\eta$-minimizers of order $m$. 
       	 Thus, $u^{*}$ is a solution to $VVLIP$.
       \end{proof}
   \section{\bf Conclusion}
   The strong concept of strongly geodesic preinvexity of order $m$ w.r.t. $\eta$ on geodesic invex sets, strongly $\eta$-invex functions of order $m$ and strongly invariant $\eta$-monotonicity of order $m$ on RM have been introduced. The definitions presented in this paper are supported by non-trivial examples.   Several interesting properties have also been discussed. An interesting application to $MOP$ for strongly $\eta$-invex functions of order $m$ has been presented and the characterization of a strict $\eta$-minimizers of order $m$ for $MOP$ and a solution to $VVLIP$ has been derived. Our results generalize the previously known results proven by different authors.


\begin{thebibliography}{11}
          	
      \bibitem{Poury1} A. Barani, M.R. Pouryayevali, Invex sets and preinvex functions on Riemannian manifolds, J. Math. Anal. Appl., \text{\bf 328} (2007) 767-779. 
          	
       \bibitem{Poury2} A. Barani, M.R. Pouryayevali, Invariant monotone vector fields on Riemannian
       manifolds. Nonlinear Anal., \text{\bf 70} (2009) 1850-1861
     
       \bibitem{Rapcsak} T. Rapcsak, Smooth nonlinear optimization in $R^{n}$ Kluwer Academic Publishers, Dordrecht (1997).
       
       \bibitem{Udriste} C. Udriste (1994). Convex Functions and Optimization Metheds on Riemannian Manifolds. Math. Appl., 297. Kluwer Acadmic, New York.
       
       \bibitem{Pini1} R. Pini (1994). Convexity along curves and invexity. Optimization. 29:301-309.
       
        \bibitem{Pini2} R. Pini and C. Singh (1999). Generalized convexity and generalized monotonicity $J$. Inform. Optim. Sci. 20:215-233.
        
        \bibitem{Correa} R. Correa, A. Jofre, and L. Thibault (1992). Characterization of lower semi continuous convex functions. Proc. Ame. Soc. 116:67-72.
        
        \bibitem{Fan} L. Fan, S. Liu, and S. Gao (2003). Generalized monotonicity and convexity of non-differentiable functions. J. Math. Anal. Appl. 279:276-289.
     
     \bibitem{Nemeth} S. Z. Nemeth,  (1999). Monotone vector fields. Publ. Mat. 54:437-449.
     
     \bibitem{Akhlad1} A. Iqbal, I. Ahmad, Strong geodesic convex functions of order $m$, Numerical Functional Analysis and Optimization, 40:15,1840-1846. 
     
       \bibitem{Hussain1} A. Hussain, A. Iqbal, quasi strongly E-convex functions with applications, Nonlinear Functional Analysis and Applications $\mathbf{26}$ ~(2021)~1077-1089.
       
       \bibitem{Hussain2} A. Iqbal, A. Hussain, Nonlinear programming problem for strongly E-invex sets and strongly E-preinvex functions, RAIRO-Operation Research (2022).
     
     \bibitem{Ben} A. Ben-Israel, B. Mond, 1986. What is invexity? Journal of Australian Mathematical Society 28, 1-9.
      
      \bibitem{Noor}  M. A. Noor,  On generalized preinvex functions and monotonicities, Journal of Inequalities in Pure and Applied Mathematics,Volume 5, Issue 4 Article 110, 2004.
     
     \bibitem{Yang} X.M. Yang, X.Q. Yang, K.L. Teo, Generalized invexity and generalized invariant monotonicity, J. Optim.
      Theory Appl. 117 (2003) 607?625.
      
     \bibitem{hanson} M.A. Hanson, 1981. On sufficiency of Kuhn-Tucker conditions. Journal of Mathematical Analysis and Applications 80, 545-550.
     
     \bibitem{Bhatia} G. Bhatia, R.R. Sahay, Strict global minimizers and higher-order
     generalized strong invexity in multi-objective
     optimization, Journal of Inequalities and Applications 2013, 2013:31.
     
     \bibitem{Lang} S. Lang (1999). Fundamentals of Differential Geometry, Graduate Texts in Mathematics. Springer, New York.
     
      \bibitem{karamardian} S. Karamardian, S. Schaible, Seven Kinds of Monotone Maps, Journal
      of Optimization Theory and Applications, Vol. 66, pp. 37-46, 1990.
       
        \bibitem{Akhlad3}A. Iqbal, S. Ali, I. Ahmad, Some properties of geodesic semi E-convex functions Non-
       linear Analysis, Nonlinear Anal. 74 (2011), 6805-6813.
       
       \bibitem{Akhlad3} A. Iqbal, I. Ahmad, S. Ali, Strong geodesic $\alpha$-preinvexity and Invariant $\alpha$-monotonicity on Riemannian manifolds, Numerical Functional Analysis and optimization. 31(12): (2010), 1342-1361.
       
       \bibitem{Yang} W. H. Yang, L. H. Zhang, R. Song, Optimality conditions for the nonlinear programming problems on Riemannian manifolds, Pacific Journal of Optimization (2013).
       
       \bibitem{Garzon} G.R. Garzon, R.O. Gomez, A.R.Lizana, B. H. Jimenez, Optimality and Duality on Riemannian manifolds, Taiwanese Journal of mathematics (2018).
       
       \bibitem{Azagra} D. Azagra, J. Ferrera, Inf-convolution and regularization of convex functions on Riemannian manifolds of non-positive
       curvature, Rev. Mat. Complut., in press.
         \end{thebibliography}
       \end{document}